
%
\documentclass[11pt]{amsart}

\usepackage{amsmath}
\usepackage{fullpage}
\usepackage{xspace}
\usepackage[psamsfonts]{amssymb}
\usepackage[latin1]{inputenc}
\usepackage{graphicx,color}
\usepackage[curve]{xypic} 
\usepackage{hyperref}
\usepackage{graphicx}

\usepackage{amsmath}%
\usepackage{amsthm}%
\usepackage{amscd}
\usepackage{amsfonts}%
\usepackage{amssymb}%
\usepackage{graphicx}

\usepackage{mathrsfs}

\usepackage{tikz}
\usetikzlibrary{matrix,arrows}

\usepackage{tikz-cd}

%
\newtheorem{theorem}{Theorem}[section]

\newtheorem{conjecture}[theorem]{Conjecture}
\newtheorem{corollary}[theorem]{Corollary}

\newtheorem{lemma}[theorem]{Lemma}

\newtheorem{proposition}[theorem]{Proposition}

\theoremstyle{remark}

\numberwithin{equation}{section}

\newcommand{\Acal}{\mathscr{A}}

\newcommand{\Ecal}{\mathscr{E}}
\newcommand{\Fcal}{\mathscr{F}}

\newcommand{\Lcal}{\mathscr{L}}

\newcommand{\Ocal}{\mathscr{O}}

\newcommand{\Pro}{\mathbb{P}}

\newcommand{\C}{\mathbb{C}}

\newcommand{\Proj}{\mathrm{Proj}}

\newcommand{\adeg}{\mathrm{adeg}}
\newcommand{\cdeg}{\mathrm{cdeg}}

\newcommand{\Ram}{\mathrm{Ram}}
\newcommand{\dd}{\mathrm{d}}

\makeatletter
\@namedef{subjclassname@2020}{%
  \textup{2020} Mathematics Subject Classification}
\makeatother

  \DeclareFontFamily{U}{wncy}{}
    \DeclareFontShape{U}{wncy}{m}{n}{<->wncyr10}{}
    \DeclareSymbolFont{mcy}{U}{wncy}{m}{n}
    \DeclareMathSymbol{\Sha}{\mathord}{mcy}{"58}

\begin{document}
\title[]{Algebroid maps and hyperbolicity of symmetric powers}

\author{Natalia Garcia-Fritz}
\address{ Departamento de Matem\'aticas,
Pontificia Universidad Cat\'olica de Chile.
Facultad de Matem\'aticas,
4860 Av.\ Vicu\~na Mackenna,
Macul, RM, Chile}
\email[N. Garcia-Fritz]{natalia.garcia@uc.cl}%

\author{Hector Pasten}
\address{ Departamento de Matem\'aticas,
Pontificia Universidad Cat\'olica de Chile.
Facultad de Matem\'aticas,
4860 Av.\ Vicu\~na Mackenna,
Macul, RM, Chile}
\email[H. Pasten]{hector.pasten@uc.cl}%

\thanks{N.G.-F. was supported by ANID Fondecyt Regular grant 1211004 from Chile. H.P. was supported by ANID Fondecyt Regular grant 1230507 from Chile.}
\date{\today}
\subjclass[2020]{Primary: 32Q45; Secondary: 32Q05, 14J20} %
\keywords{Symmetric power, hyperbolicity, algebroid function, algebraic points}%

\begin{abstract} Given a complex projective algebraic variety $X$ we define $ h(X)$ as the largest $n$ such that the $n$-th symmetric power of $X$ is (Brody) hyperbolic. Using Nevanlinna theory for algebroid maps, we give non-trivial lower bounds for $ h(X)$. From an arithmetic point of view, the problem is closely related to the finiteness of algebraic points of bounded degree in varieties over number fields. We provide explicit applications of our results in the case of curves embedded in surfaces and in the case of subvarieties of abelian varieties. 
\end{abstract}

\maketitle



\section{Introduction} 

\subsection{Hyperbolicity and symmetric powers} The goal of this article is to develop tools for establishing hyperbolicity of symmetric powers of a complex projective variety. This subject is motivated by considerations in algebraic geometry, complex geometry, and arithmetic, which will be discussed below. 

The topic of hyperbolicity of symmetric powers $X^{(n)}$ of a variety $X$ was first studied in a systematic way by Cadorel, Campana, and Rousseau \cite{CCR} using tools from algebraic geometry and the fundamental vanishing theorem of Demailly \cite{Dem}  and Siu--Yeung \cite{SY} for pull-backs of jet differentials via holomorphic functions, applied to the case of holomorphic maps into $X^{(n)}$. In our approach we prove instead a vanishing theorem for algebroid functions (Theorem \ref{ThmAlgHyp}) which will allow us to directly work on $X$ rather than on $X^{(n)}$. In this way we obtain hyperbolicity criteria (Theorems \ref{ThmAlgHyp} and  \ref{ThmSymmHyp}) with explicit numerical conditions that will allow us to apply our results in concrete examples. Before describing our results in a precise way, we need to discuss some preliminaries.

\subsection{Some notation} 
Let $X$ be a complex projective variety. We say that $X$ is (Brody) \emph{hyperbolic} if every holomorphic map $f\colon\C\to X$ is constant.

For $n\ge 0$ we define the \emph{$n$-th symmetric power} of a variety $X$ as $X^{(n)}=X^n/S_n$, where the symmetric group $S_n$ acts on $X^n$ by permuting the coordinates (in particular, $X^{(0)}$ is a point.) 

Let us define the \emph{hyperbolicity level} $ h(X)$ of the variety $X$ as the largest integer $n\ge 0$ such that $X^{(n)}$ is hyperbolic. We note that $ h(X)$ is well-defined (finite) which can be seen by considering a generically finite map $X\dashrightarrow \Pro^{\dim X}$ and pulling back a general line ---this actually shows that $h(X)$ is bounded by the degree of irrationality of $X$. Also, we note that $X^{(m)}$ is hyperbolic if and only if $m\le  h(X)$; indeed, for such an $m$ one can embed $X^{(m)}$ into $X^{(h(X))}$. In particular, $ h(X)>0$ means that $X$ is hyperbolic.
 The main problem that we consider is to give non-trivial lower bounds for $h(X)$. We will approach this problem by means of Nevanlinna theory of algebroid functions. 

Let us recall that, roughly speaking, an algebroid function of degree $d$ can be thought as a $d$-valued holomorphic map $f\colon\C\to X$. Then one sees that for $n\ge 1$, the variety $X^{(n)}$ is hyperbolic if and only if every algebroid map into $X$ of degree $d\le n$ is constant. See Section \ref{SecAlgebroidIntro} for details.

\subsection{Connections with arithmetic} Besides the intrinsic interest of the subject of hyperbolicity, its study is also motivated by arithmetic considerations. Indeed, we have the following conjecture of Lang (cf.\ \cite{LangHyp0, LangHyp}):

\begin{conjecture}[Lang] \label{ConjLang} Let $Y$ be a projective variety defined over a number field $k$. If $Y_\C$ is hyperbolic, then $Y(L)$ is finite for every number field $L$ extending $k$.
\end{conjecture}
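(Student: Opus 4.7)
The plan is to acknowledge upfront that Conjecture \ref{ConjLang} is Lang's celebrated open problem, so what follows is a programmatic sketch rather than a genuine proof outline. The natural framework is Vojta's dictionary between Nevanlinna theory and Diophantine approximation, which suggests converting the complex-analytic input (Brody hyperbolicity of $Y_{\C}$) into an arithmetic finiteness statement by matching the absence of non-constant holomorphic maps $\C\to Y_\C$ with scarcity of $L$-points.

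First I would dispose of the known cases, which serve as benchmarks that any general approach must recover. For $\dim Y=1$, Brody hyperbolicity is equivalent to genus $\ge 2$, and Faltings' theorem on the Mordell conjecture settles the claim. If $Y$ embeds as a closed subvariety of an abelian variety $A/k$, hyperbolicity of $Y_\C$ forces $Y$ to contain no translate of a positive-dimensional abelian subvariety, and then Faltings' theorem on subvarieties of abelian varieties yields finiteness of $Y(L)$ for every number field $L\supseteq k$. A similar combination of Vojta's theorem with work of Noguchi--Winkelmann--Yamanoi handles the case of hyperbolically embedded subvarieties of semi-abelian varieties. These partial results are essentially all that is unconditionally available.

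For the general case, the strategy would be to produce, from a hypothetical infinite set of $L$-points on $Y$, sufficiently many twisted symmetric jet differentials vanishing along an appropriate ample divisor, and then appeal to an arithmetic analogue of the Demailly and Siu--Yeung vanishing theorems to derive a contradiction. The vanishing theorem for algebroid maps stated later in this paper as Theorem \ref{ThmAlgHyp} is in fact the complex analogue of precisely the statement one would need on the arithmetic side: algebroid maps $\C\to X$ of degree $d$ correspond, via the tautological map $Y(L)\to Y^{(d)}(k)$ for $[L:k]=d$, to algebraic points of degree $\le d$. In particular, arithmetic translations of the hyperbolicity criteria obtained here would yield finiteness of points of bounded degree on varieties with hyperbolic symmetric powers, conditional on Vojta's Main Conjecture or a suitable weakening.

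The hard part, of course, is that there is currently no general mechanism for passing from Brody hyperbolicity to arithmetic finiteness. Complex hyperbolicity is a global analytic condition that is insensitive to the arithmetic of the base field, while finiteness of $L$-rational points depends delicately on Diophantine geometry; bridging the two amounts essentially to proving Vojta's conjectures in sufficient generality, which is out of reach. For this reason, any honest ``proof plan'' reduces to identifying families of hyperbolic varieties where the analytic-to-arithmetic dictionary can be pushed through unconditionally, which is exactly the perspective adopted in the present paper through its focus on symmetric powers, curves in surfaces, and subvarieties of abelian varieties.
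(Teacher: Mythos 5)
You have correctly recognized that the statement is Lang's conjecture, which the paper records as a \emph{conjecture} and does not prove; indeed it is a famous open problem, and the paper only notes that Faltings proved it for subvarieties of abelian varieties. There is therefore no ``paper's own proof'' to compare against, and an honest response is exactly what you gave: a survey of known cases (dimension one via Faltings--Mordell, subvarieties of abelian and semi-abelian varieties via Faltings and Vojta, Noguchi--Winkelmann--Yamanoi) together with an acknowledgment that the general analytic-to-arithmetic bridge is currently out of reach and essentially equivalent to Vojta's conjectures. Your remarks are accurate and, moreover, correctly situate the role this conjecture plays in the paper, namely as motivation for the hyperbolicity results on symmetric powers rather than as something established there; the one small caveat is that a ``proof proposal'' for an unproved conjecture cannot be graded as a proof, and you were right not to pretend otherwise.
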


Lang's conjecture was proved by Faltings when $Y$ is contained in an abelian variety.

Regarding hyperbolicity of symmetric powers, there is also a remarkable connection with arithmetic. First, we have the following standard proposition:

\begin{proposition} Let $X$ be a projective variety defined over a number field $k$ and let $n\ge 1$. If for every number field $L$ extending $k$ the set $X^{(n)}(L)$ is finite, then the following holds: For every number field $L$ extending $k$, the variety $X$ contains at most finitely many algebraic points of degree $\le n$ over $L$. \end{proposition}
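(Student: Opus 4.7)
The plan is to reduce the problem of counting algebraic points of low degree on $X$ to the problem of counting $L'$-rational points of $X^{(n)}$ for a suitable finite extension $L'/L$, where the hypothesis can be invoked.

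First I would translate algebraic points into closed points of the scheme $X_L$: an algebraic point $P \in X(\bar{L})$ of degree $d$ over $L$ corresponds to a closed point $\xi_P \in X_L$ of residue degree $d$, whose geometric fibre is exactly the set of $d$ Galois conjugates of $P$. Since each closed point has at most $n$ geometric preimages, it suffices to show that for each $d$ with $1 \le d \le n$ there are only finitely many closed points of $X_L$ of degree $d$. Every such closed point $\xi$ determines an $L$-rational point $[\xi] \in X^{(d)}(L)$, namely the formal sum of the points in its geometric fibre, which is Galois-invariant by construction; this assignment is injective.

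Next I would inject $X^{(d)}(L)$ into $X^{(n)}(L')$ for a suitable number field $L' \supseteq L$. To this end, pick any closed point $x_0$ of the $k$-scheme $X$ and set $L' = L \cdot k(x_0)$; then $X$ has an $L'$-rational point $R$ (any geometric point above $x_0$). The morphism $X^{(d)} \to X^{(n)}$ given by $\alpha \mapsto \alpha + (n-d)[R]$ is defined over $L'$, and the induced map on points is injective because effective $0$-cycles form a free commutative monoid on closed points, in which cancellation holds. Composing yields an injection from closed points of $X_L$ of degree $d$ into $X^{(n)}(L')$. The hypothesis applied to the number field $L'$ now gives finiteness of $X^{(n)}(L')$, hence finiteness of the closed points of $X_L$ of degree $d$; summing over $d = 1, \ldots, n$ completes the proof.

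The only delicate point is that the padding step requires a rational base point on $X$, which may not exist over $L$ itself. This is precisely why the hypothesis is formulated uniformly over all number fields extending $k$: it is robust under enlarging the ground field, allowing one to borrow the rational point from some $L' \supseteq L$ while still controlling $X^{(n)}(L')$.
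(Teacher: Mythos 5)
Your proof is correct; note that the paper gives no proof of this proposition at all (it is simply labelled ``standard''), so there is no argument to compare against. Your argument fills that gap cleanly: identifying degree-$d$ algebraic points with closed points of $X_L$, sending each closed point to the Galois-invariant $0$-cycle it determines (an $L$-point of $X^{(d)}$), and then pushing into $X^{(n)}$ by padding with a fixed base point. You are also right to flag the one genuinely delicate issue, namely that the padding requires a rational point of $X$, which need not exist over $L$; passing to $L' = L\cdot k(x_0)$ is exactly the correct device, and it is precisely what the ``for every number field $L$ extending $k$'' quantifier in the hypothesis is there to permit. The cancellation argument for injectivity of $\alpha\mapsto\alpha+(n-d)[R]$ and the injectivity of $\xi\mapsto[\xi]$ are both sound. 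The only thing you lean on implicitly and could state once is that over a perfect field $K$ one has $X^{(d)}(K) = X^{(d)}(\bar K)^{\Gal(\bar K/K)}$, i.e.\ $K$-points of the symmetric power are exactly Galois-invariant effective $0$-cycles of degree $d$ on $X_{\bar K}$; number fields are perfect, so this is fine, but it is worth a sentence since it is the bridge on which the whole reduction rests.
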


Hence we have:

\begin{corollary} Let $X$ be a projective variety defined over a number field $k$ and let $n\ge 1$. If $X^{(n)}_\C$ is hyperbolic and if Lang's conjecture \ref{ConjLang} holds for $X^{(n)}$, then the following holds: For every number field $L$ extending $k$, the variety $X$ contains at most finitely many algebraic points of degree $\le n$ over $L$.
\end{corollary}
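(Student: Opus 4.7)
The plan is to derive the corollary as an immediate consequence of the preceding proposition, after verifying that Lang's conjecture is applicable to $X^{(n)}$.

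First, I would observe that the formation of symmetric powers is defined over the base field, so since $X$ is a projective variety defined over the number field $k$, the same holds for $X^{(n)} = X^n/S_n$. In particular, it makes sense to speak of $X^{(n)}(L)$ for number fields $L$ extending $k$, and the complex variety attached to $X^{(n)}$ coincides with $(X_\C)^{(n)} = X^{(n)}_\C$.

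Next, since we assume $X^{(n)}_\C$ is hyperbolic and we assume Lang's conjecture (Conjecture \ref{ConjLang}) holds for $X^{(n)}$, applying the conclusion of Conjecture \ref{ConjLang} to the variety $X^{(n)}$ yields that $X^{(n)}(L)$ is finite for every number field $L$ extending $k$.

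Finally, this is exactly the hypothesis of the preceding proposition applied to $X$ and $n$, so the proposition directly gives the desired conclusion: for every number field $L$ extending $k$, the variety $X$ contains at most finitely many algebraic points of degree $\le n$ over $L$. There is essentially no obstacle here; the corollary is purely a formal composition of the proposition with the assumed validity of Lang's conjecture for the particular variety $X^{(n)}$, and the only point worth noting is the (trivial) verification that $X^{(n)}$ inherits a model over $k$ so that the hypotheses of Conjecture \ref{ConjLang} can be invoked.
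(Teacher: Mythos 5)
Your proof is correct and is exactly the intended argument: the paper derives the corollary simply by the phrase ``Hence we have,'' which is precisely the composition you spell out (Lang's conjecture for $X^{(n)}$ gives finiteness of $X^{(n)}(L)$, and the preceding proposition then gives finiteness of degree $\le n$ points). The remark about $X^{(n)}$ inheriting a model over $k$ is a reasonable and correct observation.
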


Lang's conjecture is not needed in the case of curves, thanks to results of Faltings \cite{Faltings2}, see for instance \cite{Frey,  HarrisSilverman, Vogt2, Vogt1, VojtaQuadratic} and the references therein. In this context, let us make two remarks on the hyperbolicity level $ h(C)$ of a smooth projective curve $C$:

\begin{itemize} 

\item Suppose that $C$ is defined over a number field $k$. The \emph{arithmetic degree of irrationality} ${\rm a.irr}_{\overline{k}}(C)$ as defined in \cite{Vogt1} satisfies ${\rm a.irr}_{\overline{k}}(C) =  h(C)+1$. This follows by considering the addition map $C^{(e)}\to {\rm Jac}(C)$ and applying the results of Faltings \cite{Faltings2} and Kawamata \cite{Kawamata}.

\item If $C$ is defined over $\C$, then the gonality $\gamma(C)$ of $C$ satisfies $\gamma(C)\ge  h(C)+1\ge \gamma(C)/2$. This follows from Proposition 1 (ii) in \cite{Frey}.

\end{itemize}

\subsection{Algebroid functions} \label{SecAlgebroidIntro}

Classically, it was understood that an algebroid map $w\colon\C\to X$ of degree $d$ is a $d$-valued holomorphic function. For instance $f(z)=\sqrt{z}$ defines an algebroid function of degree $2$ into $\Pro^1$. Another point of view on algebroid functions is given by considering algebraic elements over the field of complex meromorphic functions in one variable (such as $\sqrt{z}$.) However, for our treatment it will be more convenient to follow an alternative approach. 

An \emph{algebroid function to $X$} is a triple $(B,f,\pi)$ where
\begin{itemize}
\item $B$ is a connected Riemann surface,
\item $\pi\colon B\to \C$ is a proper, surjective, ramified covering of finite degree, and
\item $f\colon B\to X$ is a holomorphic map.
\end{itemize}
The \emph{covering degree} of $(B,f,\pi)$ is defined as $\cdeg (B,f,\pi)=\deg \pi$. On the other hand, for very general values of $z\in \C$, the set
$$
V_z=\{f(b) : \pi(b)=z\}
$$
always has the same cardinality $d$, and we define the \emph{algebroid degree} of $(B,f,\pi)$ as $\adeg(B,f,\pi)=d$. Note that $\cdeg (B,f,\pi)\ge \adeg (B,f,\pi)$, and it is not difficult to show that there is an algebroid map $(C,g,\nu)$ to the same $X$ with $\cdeg (C,g,\nu)= \adeg (C,g,\nu)$ such that there is a proper ramified cover $p\colon B\to C$ with $f=g\circ p$ and $\pi=\nu\circ p$. The existence of $(C,g,\nu)$ follows the standard Galois correspondence between finite coverings of Riemann surfaces and finite algebraic extensions of their fields of meromorphic functions. We refer the reader to \cite{Valiron,Ullrich,Selberg} for the beginning of the theory of value distribution of algebroid functions. See \cite{Stoll} for a standard reference on the subject.

When $\pi\colon B\to \C$ is understood from the context, we simply write $f$ instead of $(B,f,\pi)$.

Algebroid functions and hyperbolicity of $X^{(n)}$ are related by the following fact (see Section \ref{SecHyp}):

\begin{proposition}\label{ProphIntro} Let $X$ be a projective variety over $\C$ and $n\ge 1$ be an integer. The following are equivalent:
\begin{itemize}
\item[(i)] The hyperbolicity level of $X$ satisfies $ h(X)\ge n$
\item[(ii)] $X^{(n)}$ is hyperbolic
\item[(iii)] Every algebroid map $f\colon B\to X$ with $\adeg(f) \le n$ is constant (note that all possible $(B,f,\pi)$ must be considered.)
\end{itemize}
\end{proposition}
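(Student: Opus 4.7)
The equivalence (i) $\iff$ (ii) is immediate from the definition of $h(X)$ together with the closed embedding $X^{(m)}\hookrightarrow X^{(n)}$ for $m\le n$ (obtained by padding with a fixed base point), as already noted in the excerpt. The substance therefore lies in (ii) $\iff$ (iii), and the plan is to pass back and forth between holomorphic maps $\C\to X^{(n)}$ and algebroid maps to $X$ via the symmetric product construction together with the fiber product of $X^n\to X^{(n)}$.

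For (ii) $\Rightarrow$ (iii) I would argue contrapositively: given a non-constant algebroid map $(B,f,\pi)$ with $\adeg(f)=d\le n$, use the reduction noted in the excerpt to assume $\cdeg(f)=d$. Then define
$$F\colon\C\to X^{(d)},\qquad F(z) = \sum_{b\in\pi^{-1}(z)} e_b \cdot f(b),$$
where $e_b$ is the ramification index of $\pi$ at $b$, so that $\sum_{b\in\pi^{-1}(z)} e_b = d$. I would verify holomorphy locally: off the branch locus of $\pi$, local holomorphic sections of $\pi$ exhibit $F$ as the composition of a holomorphic map into $X^d$ with the quotient $X^d\to X^{(d)}$; at a branch point, using a local model $w\mapsto w^e$ on each sheet, elementary symmetric functions in the values $f(\zeta^i w)$ (with $\zeta$ a primitive $e$-th root of unity) descend to holomorphic functions of $z=w^e$. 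Composing with $X^{(d)}\hookrightarrow X^{(n)}$, hyperbolicity of $X^{(n)}$ forces $F$ to be constant, say $F(z)\equiv p_1+\cdots+p_d$. Off the branch locus, local sections $s_i$ of $\pi$ produce holomorphic maps $f\circ s_i$ with image in the discrete set $\{p_1,\ldots,p_d\}$, hence locally constant; since such sections cover a non-empty open subset of the connected Riemann surface $B$ and $f$ is holomorphic, $f$ must be constant, contradicting our assumption.

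For (iii) $\Rightarrow$ (ii), again contrapositively, let $\psi\colon\C\to X^{(n)}$ be non-constant and form the analytic fiber product $\tilde B = \C\times_{X^{(n)}} X^n$ via $\psi$ and the quotient $X^n\to X^{(n)}$. The projection $\tilde B\to\C$ is finite and proper (since $X^n\to X^{(n)}$ is), and its normalization is a disjoint union of Riemann surfaces, each mapping properly and surjectively onto $\C$ (each component's image in $\C$ is open and closed). For such a connected component $B$ and any coordinate projection $p_i\colon X^n\to X$, the composition $f_i\colon B\to X^n\to X$ is holomorphic, and at a generic $z$ the set $\{f_i(b):\pi(b)=z\}$ consists of $i$-th coordinates of preimages of $\psi(z)$ in $X^n$, hence has cardinality at most $|\supp\psi(z)|\le n$; so $\adeg(f_i)\le n$. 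If every $f_i$ on every component were constant, then each map $B\to X^n$ would be constant, so by the defining property of the fiber product $\psi\circ\pi$ would be constant, and surjectivity of $\pi$ would then force $\psi$ constant, a contradiction. So some triple $(B,f_i,\pi|_B)$ provides the desired non-constant algebroid map of algebroid degree at most $n$. The main technical obstacles I anticipate are the verification of holomorphy of $F$ at ramification points in the first direction and the correct handling of the normalization of $\tilde B$ in the second; both are standard facts about symmetric products and branched coverings but each deserves an explicit local computation.
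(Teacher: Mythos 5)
Your argument is correct and follows essentially the same route as the paper: symmetrize an algebroid map into a holomorphic map $\C\to X^{(d)}$ for (ii)~$\Rightarrow$~(iii), and pull a holomorphic map $\C\to X^{(n)}$ back through $X^n\to X^{(n)}$ to produce an algebroid map for (iii)~$\Rightarrow$~(ii). The paper only proves (iii)~$\Rightarrow$~(ii), treating (i)~$\Leftrightarrow$~(ii) and (ii)~$\Rightarrow$~(iii) as clear and invoking the multivalued composition with $\rho^{-1}$ informally, whereas you make this precise via the analytic fiber product and its normalization and you also avoid a small slip in the paper's write-up, which projects $F_1$ onto the \emph{first} coordinate factor rather than onto a factor along which $F_1$ is actually non-constant.
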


To establish cases of (i) and (ii) we will approach (iii) using Nevanlinna theory for algebroid maps; the goal is to establish (iii) with an $n$ as large as possible.

\subsection{Main results}

As a first step towards our hyperbolicity results, we prove the following vanishing theorem for pull-backs of symmetric differentials under an algebroid map.

\begin{theorem}[Vanishing]\label{ThmVanishing} Let $X$ be a smooth complex projective variety, let $\Acal$ be a very ample line sheaf on $X$, let $a$ and $m$ be positive integers, and let $\omega\in H^0(X, (\Acal^\vee)^{\otimes a}\otimes S^m\Omega^1_X)$. Let $f\colon B\to X$ be an algebroid map and let $d=\adeg(f)$. If $2(d-1) < a/m$ then $f^*\omega=0$.
\end{theorem}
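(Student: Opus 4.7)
My plan is to argue by contradiction via Nevanlinna theory on $B$, using the exhaustion $B_r := \pi^{-1}(\bar D_r)$ pulled back from $\C$. Since pull-back along a finite cover is injective on global sections, I may replace $(B,f,\pi)$ by a triple with $\cdeg=\adeg=d$ as in the discussion preceding the theorem; the constant case being trivial, assume $f$ is non-constant and suppose for contradiction that $f^*\omega \not\equiv 0$. Because $\Acal$ is very ample, a standard moving argument in $|\Acal|$ (the defect relation for generic hyperplanes) produces $s \in H^0(X,\Acal)$ with $f^*s \not\equiv 0$ and $N^B(r,f^*s = 0) = T^B_f(r,\Acal) - o(T^B_f(r,\Acal))$. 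The key object is
$$\eta := (f^*s)^a \cdot f^*\omega,$$
which is a non-zero global holomorphic section of $(\Omega^1_B)^{\otimes m}$, since the twist by $(\Acal^\vee)^{\otimes a}$ in $\omega$ is absorbed by $(f^*s)^a$.

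Next I exploit the sheaf-theoretic isomorphism $\Omega^1_B \cong \pi^*\Omega^1_{\C}(R)$, where $R$ is the ramification divisor of $\pi$ (local model: $\pi^*(dz) = e\,w^{e-1}\,dw$ at a point of ramification index $e$). Because $\Omega^1_{\C}$ is analytically trivial, the flat metric has zero curvature and hence $T^B(r,\Omega^1_B) = N^B(r,R) + O(1)$. Applying the Nevanlinna First Main Theorem to the non-zero holomorphic section $\eta$ of the line bundle $(\Omega^1_B)^{\otimes m}$ yields
$$N^B(r, \eta = 0) \leq m\,N^B(r,R) + O(1).$$
On the other hand, since $f^*\omega$ is holomorphic, each zero of $f^*s$ contributes multiplicity at least $a$ to the zeros of $\eta$, so $N^B(r,\eta = 0) \geq a\,N^B(r,f^*s = 0) = a\,T^B_f(r,\Acal) - o(T^B_f(r,\Acal))$.

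The crux, and the step I expect to be the main technical obstacle, is a ramification bound of the form
$$N^B(r,R) \leq 2(d-1)\,T^B_f(r,\Acal) + O(1).$$
To obtain it, I would use very ampleness of $\Acal$ to choose a pencil $s_0,s_1 \in H^0(X,\Acal)$ so that $w := (f^*s_1)/(f^*s_0)$ is a primitive generator of the degree-$d$ extension $\C(B)/\C(z)$. Then $R$ coincides with the pull-back divisor of the zero locus of the discriminant $\Delta$ of the minimal polynomial of $w$ over $\C(z)$. Writing $\Delta = \prod_{i<j}(w_i - w_j)^2$ in terms of the $d$ branches $w_1,\dots,w_d$ of $w$ and applying the standard Nevanlinna inequalities for sums and products gives $T(r,\Delta) \leq 2(d-1)\sum_i T(r,w_i) + O(1)$, where $\sum_i T(r,w_i) = T^B_f(r,\Acal) + O(1)$ in the appropriate branch-sum normalization (which matches the definition of $T^B_f$ as an integral over $B_r$). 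Jensen's formula then yields $N^B(r,R) = N(r,1/\Delta) \leq T(r,\Delta) + O(1)$, giving the claimed bound.

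Assembling everything yields $a\,T^B_f(r,\Acal)(1 - o(1)) \leq 2m(d-1)\,T^B_f(r,\Acal) + O(1)$. Since $f$ is non-constant and $\Acal$ is very ample, $T^B_f(r,\Acal) \to \infty$ (outside an exceptional set of finite logarithmic measure), so dividing by $T^B_f(r,\Acal)$ forces $a \leq 2m(d-1)$, contradicting the hypothesis $2(d-1) < a/m$. Therefore $f^*\omega = 0$.
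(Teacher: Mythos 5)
The overall architecture of your proof matches the paper's, and your sketch of the ramification bound via the discriminant of a primitive element is a perfectly reasonable alternative to the paper's route (which reduces to Ullrich's theorem for $\Pro^1$ by a generic linear projection and a blow-up). That part is not where the trouble lies. The real gap is in the step you treated as routine: the claim that the First Main Theorem applied to the holomorphic section $\eta=(f^*s)^a\cdot f^*\omega$ of $(\Omega^1_B)^{\otimes m}$ gives
$$
N^B(r,\eta=0)\ \le\ m\,N^B(r,R)+O(1).
$$
Using the metric on $\Omega^1_B\simeq\pi^*\Omega^1_{\C}\otimes\Ocal_B(R)$ pulled back from the flat metric on $\Omega^1_{\C}$ (the only metric for which $T^B(r,\Omega^1_B)=N^B(r,R)+O(1)$, since $\pi^*dz$ then has constant norm $1$), the FMT reads $T^B(r,(\Omega^1_B)^{\otimes m})=N^B(r,\divi\eta)+m^B(r,\eta)+O(1)$, so $N^B(r,\divi\eta)=m\,N^B(r,R)-m^B(r,\eta)+O(1)$. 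The proximity term $m^B(r,\eta)=\int_{\partial B_r}\log\tfrac{1}{|\eta|}\,d\sigma$ is \emph{not} bounded below: after trivializing by $\pi^*dz$, the function $g=\eta/(\pi^*dz)^{\otimes m}$ is a polynomial expression in the $z$-derivatives of the coordinate functions of $f$ (times $(f^*s)^a$ contracted against the $(\Acal^\vee)^{\otimes a}$-twist of $\omega$), and such derivative terms are typically unbounded on $\partial B_r$. So $-m^B(r,\eta)$ can be as large as the quantity you are trying to dominate, and the inequality $N^B(r,\eta=0)\le m\,N^B(r,R)+O(1)$ simply does not follow from FMT.

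Controlling precisely this proximity term is the whole content of McQuillan's Tautological Inequality (stated in the paper as Theorem \ref{ThmMcQuillan}, i.e.\ Vojta's Corollary 29.9), or equivalently of the lemma on the logarithmic derivative suitably globalized to algebroid maps: it says that the derivative/jet proximity term is $o(T_f(\Acal,r))$ outside an exceptional set. The paper invokes exactly this deep input and then combines it with the ramification bound (Theorem \ref{ThmRamBd}); your proposal replaces it with a bare FMT argument that cannot absorb the derivative term. The compact (algebraic) case, where $\deg$ of a line bundle with a non-zero section is nonnegative with no boundary term, is the degenerate situation where your reasoning would be valid — and indeed that is exactly the argument the paper gives for the algebraic subcase in its discussion after Theorem \ref{ThmMcQuillan}. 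To repair your proof you must import a logarithmic-derivative-type estimate or McQuillan's inequality; everything else in your write-up (the reduction to $\cdeg=\adeg$, the choice of a non-defective $s\in H^0(X,\Acal)$, the lower bound $N^B(r,\eta=0)\ge a\,N^B(r,f^*s=0)$, and a ramification bound à la Ullrich) is sound.
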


Before discussing our main hyperbolicity results we need some notation.
For a vector sheaf $\Ecal$ on a smooth complex projective variety $X$ we consider the projectivized vector sheaf $P=\Proj_X \Ecal$ which comes with a canonical projection $p:P\to X$ and a tautological line sheaf $\Ocal_P(1)$ associated to $\Ecal$. For every line sheaf $\Lcal$ on $X$ we have an isomorphism $P\simeq \Proj_X(\Lcal\otimes \Ecal)$ and we have that for every $m$ there is a canonical isomorphism 
$$
H^0(X,\Lcal\otimes S^m\Ecal)\simeq H^0(P, p^*\Lcal\otimes \Ocal_P(m)).
$$

We now specialize to the case $\Ecal=\Omega^1_X$, where write $P=\Proj_X \Omega^1_X$ which comes with the canonical projection $p:P\to X$. Given a line sheaf $\Lcal$ on a smooth complex projective variety $X$, let us introduce the invariant $\sigma(X, \Lcal)$ by setting
$$
\sigma(X, \Lcal)=\sup\{a/m : a\ge 0, m\ge 1\mbox{ and }(p^*\Lcal^{\vee})^{\otimes a}\otimes \Ocal_P(m) \mbox{ is globally generated on $P$}\}
$$
and we define $\sigma(X, \Lcal)=0$ if $(p^*\Lcal^{\vee})^{\otimes a}\otimes \Ocal_P(m)$ is never globally generated  on $P$. One can define a more general invariant by replacing $\Omega^1_X$ by other vector sheaves, but this will not be necessary for us. We make the following remarks:
\begin{itemize}
\item The condition that the line sheaf $(p^*\Lcal^{\vee})^{\otimes a}\otimes \Ocal_P(m)$ be globally generated on $P$ is weaker than the condition that the vector sheaf $(\Lcal^{\vee})^{\otimes a}\otimes S^m\Omega^1_X$ be globally generated on $X$. 
\item If $\Omega^1_X$ is ample (which, by definition, means that $\Ocal_P(1)$ is an ample line sheaf on $P$) then for every line sheaf $\Lcal$ there are $a,m>0$ such that $(p^*\Lcal^{\vee})^{\otimes a}\otimes \Ocal_P(m)$ is globally generated on $P$. Thus, if $\Omega^1_X$ is ample, for every line sheaf $\Lcal$ on $X$ we have $\sigma(X,\Lcal)>0$ (possibly $\sigma(X,\Lcal)=+\infty$, for instance, when $\Lcal=\Ocal_X$.) 
\end{itemize}

We see $\sigma(X, \Lcal)$ as a measure of positivity of $\Omega^1_X$ with respect to $\Lcal$. For more information on base loci and positivity notions of vector bundles we refer the reader to \cite{BaseLoci}.

Our main hyperbolicity result for algebroid maps is (cf.\ Section \ref{SecHyp}):

\begin{theorem}[Criterion for non-existence of algebroid maps] \label{ThmAlgHyp} Let $X$ be a smooth complex projective variety, let $\Acal$ be a very ample line sheaf on $X$, and let $n\ge 1$ be an integer satisfying
$$
2(n-1) < \sigma(X, \Acal).
$$
Then every algebroid map $f\colon B\to X$ with $\adeg(f)\le n$ is constant.
\end{theorem}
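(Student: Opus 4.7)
The plan is to combine the vanishing Theorem \ref{ThmVanishing} with the tautological lift of a non-constant algebroid map to the projectivized cotangent sheaf $P = \Proj_X \Omega^1_X$, using global generation on $P$ supplied by the definition of $\sigma(X,\Acal)$.

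Since $\sigma(X,\Acal) > 2(n-1)\ge 0$ is defined as a supremum, one can choose positive integers $a, m$ with $2(n-1) < a/m$ such that $\Lcal := (p^*\Acal^\vee)^{\otimes a} \otimes \Ocal_P(m)$ is globally generated on $P$. Suppose for contradiction that $f : B \to X$ is a non-constant algebroid map with $d := \adeg(f) \le n$, so in particular $2(d-1) < a/m$. The open subset $U := \{b \in B : df_b \ne 0\}$ is non-empty because $f$ is non-constant on the connected Riemann surface $B$. For each $b \in U$ the cotangent map $\Omega^1_{X,f(b)} \twoheadrightarrow \Omega^1_{B,b}$ is a rank-one quotient, hence determines a point $\tilde f(b) \in P_{f(b)}$, and these fit together into a holomorphic lift $\tilde f : U \to P$ with $p \circ \tilde f = f|_U$ together with a canonical identification $\tilde f^* \Ocal_P(1) \simeq \Omega^1_B|_U$.

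Fix any $b_0 \in U$. By global generation of $\Lcal$, one can select a section $\tilde s \in H^0(P, \Lcal)$ with $\tilde s(\tilde f(b_0)) \ne 0$. Under the canonical isomorphism $H^0(P,\Lcal) \simeq H^0(X, (\Acal^\vee)^{\otimes a} \otimes S^m\Omega^1_X)$, this corresponds to a global section $s$ of $(\Acal^\vee)^{\otimes a} \otimes S^m\Omega^1_X$ on $X$. A direct unwinding of the tautological identifications will show that on $U$, under $\tilde f^* \Lcal|_U \simeq f^*(\Acal^\vee)^{\otimes a}|_U \otimes (\Omega^1_B|_U)^{\otimes m}$, one has $\tilde f^* \tilde s = f^* s$; in particular $(f^* s)(b_0) \ne 0$, so $f^* s$ is not identically zero on $B$. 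However, Theorem \ref{ThmVanishing} applied to $\omega = s$ forces $f^*s = 0$ because $2(d-1) < a/m$, which is the required contradiction.

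The main obstacle I expect is purely bookkeeping: verifying the pointwise compatibility $\tilde f^*\tilde s = f^*s$ on $U$, which is a careful unwinding of the Grothendieck convention for $P = \Proj_X \Omega^1_X$ and of the canonical identification $\tilde f^*\Ocal_P(1) \simeq \Omega^1_B$ on the locus where $df$ is non-vanishing. The remaining ingredients---the selection of $(a,m)$ from the supremum definition of $\sigma$, the existence of $b_0$, and the final appeal to the vanishing theorem---are routine.
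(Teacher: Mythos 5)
Your proof is correct and follows essentially the same strategy as the paper: select $(a,m)$ realizing $2(n-1)<a/m\le\sigma(X,\Acal)$ with $(p^*\Acal^\vee)^{\otimes a}\otimes\Ocal_P(m)$ globally generated, lift a hypothetical non-constant $f$ to $P=\Proj_X\Omega^1_X$, and play global generation against the vanishing Theorem~\ref{ThmVanishing}. The paper packages the lift-plus-global-generation step into Lemma~\ref{LemmaConst}; you have simply re-derived that lemma inline, including the routine bookkeeping identity $\tilde f^*\tilde s=f^*s$.
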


Using Proposition \ref{ProphIntro}, we will deduce from Theorem \ref{ThmAlgHyp} our main hyperbolicity result for symmetric powers (cf.\ Section \ref{SecHyp}):

\begin{theorem}[Hyperbolicity of symmetric powers] \label{ThmSymmHyp} Let $X$ be a smooth complex projective variety and let $\Acal$ be a very ample line sheaf on $X$. Then $h(X)\ge \sigma(X,\Acal)/2$.
\end{theorem}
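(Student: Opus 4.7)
The plan is to deduce this immediately from Theorem \ref{ThmAlgHyp} and the equivalence (i)$\Leftrightarrow$(iii) of Proposition \ref{ProphIntro}. Set $s := \sigma(X,\Acal)$. Since $h(X) \ge 0$ by the definition of the hyperbolicity level, the bound $h(X) \ge s/2$ is trivial when $s \le 0$, so I will assume $s > 0$.

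Next I would take $n$ to be the largest positive integer satisfying $2(n-1) < s$; such an $n$ exists because $s > 0$ already allows $n = 1$. Theorem \ref{ThmAlgHyp} then applies and guarantees that every algebroid map $f \colon B \to X$ with $\adeg(f) \le n$ is constant. Proposition \ref{ProphIntro} converts this non-existence statement into the hyperbolicity assertion $h(X) \ge n$.

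The remaining step is a purely arithmetic comparison between the integer $n$ chosen above and the real number $s/2$. The defining inequality $n < s/2 + 1$ combined with maximality of $n$ gives two cases: if $s/2 \in \Z$ then $n = s/2$, while if $s/2 \notin \Z$ then $n = \lfloor s/2 \rfloor + 1 > s/2$. In either case $n \ge s/2$, so
\[
 h(X) \;\ge\; n \;\ge\; \frac{s}{2} \;=\; \frac{\sigma(X,\Acal)}{2},
\]
as required. I do not expect any genuine obstacle in this deduction: the depth of the theorem is already absorbed into Theorem \ref{ThmAlgHyp} (which in turn rests on the vanishing Theorem \ref{ThmVanishing}) and the algebroid-to-symmetric-power dictionary of Proposition \ref{ProphIntro}, so the proof here is essentially bookkeeping, with the only thing to watch being the floor/ceiling comparison above when $s/2$ is not an integer.
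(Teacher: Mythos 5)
Your proof is correct and follows essentially the same route as the paper: apply Theorem \ref{ThmAlgHyp} to rule out non-constant algebroid maps of small algebroid degree, invoke Proposition \ref{ProphIntro} to translate this into $h(X)\ge n$ for the largest integer $n$ with $2(n-1)<\sigma(X,\Acal)$, and then note $n\ge \sigma(X,\Acal)/2$. Your extra care about the case $\sigma(X,\Acal)=0$ and the floor/ceiling comparison is harmless bookkeeping that the paper leaves implicit.
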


We remark that Cadorel, Campana, and Rousseau have a related result for pseudo-hyperbolicity using jet differentials instead of the sheaves of symmetric differentials $S^m\Omega^1_X$, see Theorem 4 in \cite{CCR}. In the notation of Theorem \ref{ThmAlgHyp}, their numerical condition reads $2n(n-1) < \sigma(X, \Acal)$. So, our result improves on this numerical restriction.

\subsection{Applications}\label{SecAppl}

As a first application we have the following lower bound for $ h(C)$ when $C$ is a smooth curve in a surface.

\begin{theorem}[The case of curves in surfaces]\label{ThmCurves} Let $S$ be a smooth complex projective surface and let $H$ be a very ample divisor on $S$. Let $\ell \ge 1$ be an integer and let $C\subseteq S$ be a smooth projective curve with $C\sim \ell H$. Then
$$
 h(C)\ge \frac{1}{2}\left(\ell + \frac{H.K_S}{H^2}\right)
$$
where $K_S$ is the canonical class of $S$.
\end{theorem}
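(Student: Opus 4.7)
The plan is to apply Theorem \ref{ThmSymmHyp} directly to $C$, taking $\Acal = \Ocal_S(H)|_C$, which is very ample on $C$ since $H$ is very ample on $S$. It then suffices to show
$$
\sigma(C, \Ocal_C(H)) \ge \ell + \frac{H.K_S}{H^2}.
$$

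The first simplification is that on a smooth curve the sheaf $\Omega^1_C$ is already a line sheaf, so $P = \Proj_C \Omega^1_C \simeq C$ and the tautological line sheaf $\Ocal_P(1)$ is canonically identified with $\Omega^1_C$. Thus for $a \ge 0$ and $m \ge 1$, the line sheaf $(p^*\Ocal_C(-H))^{\otimes a} \otimes \Ocal_P(m)$ becomes $(\Omega^1_C)^{\otimes m} \otimes \Ocal_C(-aH)$, and $\sigma(C, \Ocal_C(H))$ is the supremum of $a/m$ for which this line bundle on $C$ is globally generated.

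Next I would compute the relevant degrees. By adjunction, $\Omega^1_C \cong (K_S + C)|_C$, and since $C \sim \ell H$,
$$
\deg \Omega^1_C \;=\; 2g(C)-2 \;=\; (K_S + C).C \;=\; \ell\, H.K_S + \ell^2 H^2.
$$
Also $\deg \Ocal_C(H) = H.C = \ell H^2$. Hence the line bundle $(\Omega^1_C)^{\otimes m}\otimes\Ocal_C(-aH)$ on $C$ has degree $m(2g-2) - a\ell H^2$. Invoking the standard fact that a line bundle on a smooth projective curve of degree $\ge 2g$ is globally generated, the bundle is globally generated whenever
$$
\frac{a}{m} \;\le\; \frac{2g-2}{\ell H^2} - \frac{2g}{m \ell H^2}.
$$

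Letting $m \to \infty$ along $a = \lfloor m\cdot(\tfrac{2g-2}{\ell H^2} - \epsilon)\rfloor$ for any $\epsilon > 0$, we obtain elements of the defining set for $\sigma$ whose ratios $a/m$ approach $(2g-2)/(\ell H^2)$. Substituting the adjunction formula for $2g-2$ gives
$$
\frac{2g-2}{\ell H^2} \;=\; \frac{\ell\, H.K_S + \ell^2 H^2}{\ell H^2} \;=\; \ell + \frac{H.K_S}{H^2},
$$
so $\sigma(C, \Ocal_C(H)) \ge \ell + H.K_S/H^2$, and Theorem \ref{ThmSymmHyp} yields the stated bound on $h(C)$. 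There is really no hard step here; the computation is essentially forced once one observes that $\Proj \Omega^1_C = C$. The only thing to watch is that the bound is trivial (as $h(C)\ge 0$) if $H.K_S/H^2$ is sufficiently negative, so no hypothesis of positivity on $K_S$ is needed.
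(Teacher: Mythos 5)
Your proof is correct and follows essentially the same route as the paper: reduce $\sigma(C,\Ocal_C(H))$ to a degree computation on $C$ via the identification $\Proj_C\Omega^1_C \simeq C$, use adjunction to evaluate $\deg K_C$, observe that a line bundle of sufficiently large degree on a curve is globally generated, and then invoke Theorem~\ref{ThmSymmHyp}. The only cosmetic difference is that the paper phrases the passage to the supremum as ``positive degree suffices'' (a positive-degree class has a multiple of degree $\ge 2g$, hence globally generated, with the same ratio $a/m$), whereas you construct an explicit sequence $a/m$ approaching $(2g-2)/(\ell H^2)$; both are the same argument.
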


Let us finally discuss higher dimensional varieties. In this case we give an explicit construction for subvarieties of abelian varieties (not obtained as products of curves) large values of $\sigma(X,\Acal)$ in a fixed dimension.

\begin{theorem}\label{ThmAbVar} Let $A$ be a complex abelian variety, let $\Acal$ be a symmetric very ample line sheaf on $A$, let $X\subseteq A$ be a positive-dimensional smooth complete intersection of ample smooth hypersurfaces in $A$, and assume that $\sigma(X,\Acal|_X)>0$ (this is the case, for instance, if $\Omega^1_X$ is ample.) For an integer $\ell \ge 1$, let $X_\ell  = [\ell]^*X$ where $[\ell]\colon A\to A$ is the multiplication-by-$\ell$ map. Then $X_\ell$ is a smooth projective variety with
$$
2 h(X_\ell)\ge \sigma(X_\ell,\Acal|_{X_\ell}) \ge \ell^2\cdot \sigma(X,\Acal|_X).
$$
\end{theorem}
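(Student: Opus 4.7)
I plan to prove the theorem by splitting it into three assertions: smoothness and irreducibility of $X_\ell$, the hyperbolicity bound $2h(X_\ell)\ge\sigma(X_\ell,\Acal|_{X_\ell})$, and the main bound $\sigma(X_\ell,\Acal|_{X_\ell})\ge\ell^2\sigma(X,\Acal|_X)$.

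For the first, I would write $X=H_1\cap\cdots\cap H_r$ as a transverse complete intersection of smooth ample hypersurfaces. Since we work in characteristic zero, $[\ell]$ is a finite étale isogeny, so each $[\ell]^{-1}(H_i)$ is again a smooth ample hypersurface (étale pullback preserves smoothness; pullback by a finite surjection preserves ampleness), and the intersection $X_\ell=\bigcap_i[\ell]^{-1}(H_i)$ remains transverse because the tangent map of $[\ell]$ is everywhere an isomorphism. Iterated Lefschetz hyperplane theorem then gives connectedness, hence irreducibility, and $X_\ell$ is projective as a closed subscheme of $A$. For the second assertion, $\Acal|_{X_\ell}$ is very ample on $X_\ell$ as the restriction of the very ample $\Acal$, so Theorem \ref{ThmSymmHyp} applies directly.

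For the main inequality, let $\phi_\ell\colon X_\ell\to X$ be the restriction of $[\ell]$, which is again étale, and let $\iota\colon X_\ell\hookrightarrow A$, $j\colon X\hookrightarrow A$ be the inclusions; these fit in the identity $[\ell]\circ\iota=j\circ\phi_\ell$. By the Mumford formula and the symmetry of $\Acal$, we have $[\ell]^*\Acal\simeq\Acal^{\otimes\ell^2}$, which after restriction along $\iota$ yields
$$
\phi_\ell^*(\Acal|_X)\simeq(\Acal|_{X_\ell})^{\otimes\ell^2}.
$$
Étaleness of $\phi_\ell$ further provides a canonical isomorphism $\phi_\ell^*\Omega^1_X\simeq\Omega^1_{X_\ell}$, and the base-change compatibility of Grothendieck's $\Proj$ yields a Cartesian diagram
$$
\begin{tikzcd}
P_{X_\ell}\ar[r,"\Phi"]\ar[d,"p_{X_\ell}"'] & P_X\ar[d,"p_X"]\\
X_\ell\ar[r,"\phi_\ell"'] & X
\end{tikzcd}
$$
with $\Phi^*\Ocal_{P_X}(1)\simeq\Ocal_{P_{X_\ell}}(1)$. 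Given admissible $(a,m)$ for $\sigma(X,\Acal|_X)$, i.e., $(p_X^*(\Acal|_X)^\vee)^{\otimes a}\otimes\Ocal_{P_X}(m)$ is globally generated on $P_X$, its pullback by $\Phi$ remains globally generated on $P_{X_\ell}$ (pullback preserves global generation) and, via the identifications above, equals $(p_{X_\ell}^*(\Acal|_{X_\ell})^\vee)^{\otimes a\ell^2}\otimes\Ocal_{P_{X_\ell}}(m)$. Hence $(a\ell^2,m)$ is admissible for $\sigma(X_\ell,\Acal|_{X_\ell})$ with ratio $\ell^2(a/m)$; taking the supremum over all admissible $(a,m)$ yields the claim.

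The only obstacles are bookkeeping: confirming the transverse complete-intersection structure of $X_\ell$, and carefully tracking the identifications between pullbacks of $\Ocal_P(m)$ across the étale base change. Neither is substantive once the Mumford formula and the base-change compatibility of $\Proj$ are in place.
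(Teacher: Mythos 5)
Your proof is correct and follows essentially the same route as the paper's: smoothness and connectedness of $X_\ell$ from \'etaleness of $[\ell]$ together with the complete-intersection structure, Mumford's formula $[\ell]^*\Acal\simeq\Acal^{\otimes\ell^2}$, the \'etale identification $\phi_\ell^*\Omega^1_X\simeq\Omega^1_{X_\ell}$, and the fact that pullback preserves global generation to transport admissible pairs $(a,m)$ to $(a\ell^2,m)$. The only cosmetic difference is that you phrase the key identification via the Cartesian base-change diagram for $\Proj$, whereas the paper records the same isomorphism directly on symmetric differentials and then passes to the projectivization.
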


Varieties $X$ satisfying the assumptions of the theorem are abundant; see \cite{Debarre}. See also \cite{Brotbek}.


\section{Nevanlinna theory and a ramification bound}

\subsection{Basic notation} We will use the standard functions appearing in Nevanlinna theory for algebroid maps, namely, the proximity function, the counting function, and the characteristic (or height) function; see for instance Section 27 in \cite{VojtaCIME}. In particular, for an algebroid map $(B,f,\pi)$ all these functions are normalized by the covering degree $\cdeg(B,f,\pi)=\deg \pi$. Let us fix the notation for these Nevanlinna-theoretical functions.

Let $X$ be a smooth complex projective variety, let $D$ be a divisor on $X$, and let $f\colon B\to X$ be an algebroid map to $X$ (with $\pi\colon B\to \C$) whose image is not contained in the support of $D$. Let us denote by $r$ a positive real variable. The proximity function relative to $D$ is $m_f(D,r)$, the counting function relative to $D$ is $N_f(D,r)$, and the height relative to $D$ is $T_f(D,r)$. These functions are well-defined up to adding a bounded function $O(1)$ and, as usual, they are normalized by $\cdeg(f)=\deg(\pi)$.

The fact that these functions are normalized by the covering degree is useful due to the following observation: If $p\colon C\to B$ is a proper finite surjective map of Riemann surfaces and $g=f\circ p\colon C\to X$, then $m_g(D,r)=m_f(D,r)$ and $N_g(D,r)=N_f(D,r)$. 

The First Main Theorem of Nevanlinna theory says that if $D'$ is another divisor on $X$ which is linearly equivalent to $D$ and whose support does not contain the image of $f$, then $T_f(D',r)=T_f(D,r)+O(1)$. Thus, one can define (up to adding $O(1)$) the height function of $f$ relative to any line sheaf $\Lcal$, which we denote by $T_f(\Lcal,r)$. We refer the reader to Section 27 in \cite{VojtaCIME}.

We also need the counting function for the ramification of $\pi\colon B\to \C$. For a point $b\in B$ we let $e_b$ be the ramification multiplicity of $\pi$ at $b$ and note that $e_b=1$ except for a discrete countable set of points. Let us define
$$
n_{\Ram(\pi)}(t)=\sum_{|\pi(b)|\le t} (e_b-1).
$$
With this notation, the ramification counting function is (cf.\ \cite{Ullrich} and \cite{VojtaCIME})
$$
N_{\Ram(\pi)}(r)=\frac{1}{\deg(\pi)}\left(n_{\Ram(\pi)}(0)\log r + \int_{0}^r \left(n_{\Ram(\pi)}(t)-n_{\Ram(\pi)}(0)\right)\frac{\dd t}{t}\right).
$$

\subsection{The ramification bound} We will need the following upper bound for $N_{\Ram(\pi)}(r)$ in terms of the height of $f$. 

\begin{theorem}[Ramification bound]\label{ThmRamBd} Let $X$ be a smooth projective variety over $\C$, let $\Acal$ be a very ample line sheaf on $X$, and let $(B,f,\pi)$ be an algebroid map to $X$. Suppose that $\adeg(f)=\cdeg(f)$ and let $d$ be this number. Then 
$$
N_{\Ram(\pi)}(r) \le 2(d-1)T_f(\Acal,r) + O(1).
$$
\end{theorem}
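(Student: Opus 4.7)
The plan is to reduce the claimed bound to a ramification estimate for an algebroid map $g\colon B\to\Pro^1$ obtained by composing $f$ with the projection associated to a generic pencil in $|\Acal|$, and then to extract that estimate from the discriminant of the minimal polynomial of $g$ over $\C$.

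Since $\Acal$ is very ample, I would first choose a generic two-dimensional subspace of $H^0(X,\Acal)$, producing a rational map $L\colon X\dashrightarrow\Pro^1$. Because $\adeg(f)=\cdeg(f)=d$, for very general $z\in\C$ the sheet values $f(b_1),\dots,f(b_d)$ are $d$ distinct points of $X$; a Baire-category argument over the countably many branch values of $\pi$ then shows that $L$ can be chosen so that $g:=L\circ f\colon B\to\Pro^1$ has $\adeg(g)=\cdeg(g)=d$, so that $g$ separates any two distinct $b,b'\in\pi^{-1}(z_0)$ at each branch value $z_0$, and so that $g$ is unramified at every ramification point of $\pi$. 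Since $L^*\Ocal_{\Pro^1}(1)$ is a member of $|\Acal|$, the First Main Theorem gives $T_g(r)\le T_f(\Acal,r)+O(1)$.

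The elementary symmetric functions $c_1,\dots,c_d$ of the sheet values $g_i(z):=g(b_i(z))$ are single-valued meromorphic functions on $\C$ and form the coefficients of the minimal polynomial $P(z,w)=w^d+c_1(z)w^{d-1}+\cdots+c_d(z)$ of $g$; let
\[
\Delta(z)=\prod_{i<j}(g_i(z)-g_j(z))^2
\]
be its discriminant, a meromorphic function on $\C$. At a ramification point $b_0$ of $\pi$ with $e=e_{b_0}$, pick a local parameter $t$ on $B$ with $\pi(t)=z_0+t^e$; the genericity of the pencil yields $g(t)-g(b_0)=c\,t+O(t^2)$ with $c\neq 0$, and on the $e$ sheets $t_k=\zeta^k(z-z_0)^{1/e}$ ($\zeta=e^{2\pi i/e}$) one finds $g(t_i)-g(t_j)=c(\zeta^i-\zeta^j)(z-z_0)^{1/e}+O((z-z_0)^{2/e})$, so the product over the $\binom{e}{2}$ pairs contributes order exactly $e-1$ to the vanishing of $\Delta$ at $z_0$. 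Summing over all ramification points of $\pi$ gives the pointwise inequality $n_{\Ram(\pi)}(t)\le n_0(\Delta,t)$, and after integrating and dividing by $\deg(\pi)=d$ this becomes
\[
N_{\Ram(\pi)}(r)\le \tfrac{1}{d}\,N_0(\Delta,r)+O(1).
\]

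Finally, the First Main Theorem applied to $\Delta$ gives $N_0(\Delta,r)\le T(\Delta,r)+O(1)$. Since $\Delta$ is a polynomial of total degree $2(d-1)$ in the coefficients $c_k$, and each $c_k$ is a symmetric function of the $d$ sheet values of $g$ satisfying the standard algebroid height estimate $T(c_k,r)\le d\,T_g(r)+O(1)$, one obtains $T(\Delta,r)\le 2d(d-1)\,T_g(r)+O(1)$. Chaining everything yields
\[
N_{\Ram(\pi)}(r)\le 2(d-1)\,T_g(r)+O(1)\le 2(d-1)\,T_f(\Acal,r)+O(1),
\]
which is the desired bound. The most delicate point is the local multiplicity computation at the ramification points of $\pi$: it relies on the pencil being chosen so that $g$ itself is unramified there, since otherwise extra cancellations among sheets in the same local monodromy orbit could lower the vanishing order of $\Delta$ below $e-1$ and break the constant.
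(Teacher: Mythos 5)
Your high-level strategy coincides with the paper's: both reduce to an algebroid map $g\colon B\to\Pro^1$ of the same algebroid degree $d$ obtained by projecting via a general member of $|\Acal|$, and both bound the ramification term for $g$ and then compare $T_g$ with $T_f(\Acal,r)$. The genuine divergence is that the paper invokes Ullrich's classical theorem (Theorem~\ref{ThmUllrich}) as a black box for the $\Pro^1$ case, handling the reduction through an embedding $X\hookrightarrow\Pro^M$, a linear projection $\Pro^M\dashrightarrow\Pro^1$, a blow-up of the indeterminacy locus $L$, and the functoriality/positivity of Nevanlinna heights (inequality~\eqref{EqnDivPB}); you instead project directly from $X$ via a generic pencil and then re-derive Ullrich's bound from scratch by analyzing the discriminant $\Delta$ of the minimal polynomial of $g$. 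Your route is correct in outline and is actually quite close to the classical proof of Ullrich's theorem; what it buys is self-containedness, at the cost of having to redo the local monodromy and height computations that are the real content of Ullrich's result.

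There is one misconception in your argument worth flagging. You insist, via a Baire-category choice of the pencil, that $g$ be \emph{unramified} at every ramification point of $\pi$, and you say the constant would otherwise break because extra cancellations among sheets in the same monodromy orbit could lower the vanishing order of $\Delta$. This is wrong in both directions. First, the requirement is not always achievable: if $f$ has a critical point at a ramification point $b$ of $\pi$ (i.e.\ $\mathrm{d}f_b=0$), then $\mathrm{d}g_b=0$ for \emph{every} pencil $L$, so no generic choice helps. Second, and more importantly, the requirement is unnecessary: if $g(t)-g(b_0)=c\,t^{k}+O(t^{k+1})$ with $c\neq 0$ and $k\geq 1$ in a local parameter $t$ with $\pi(t)=z_0+t^{e}$, then the pairwise differences $g(t_i)-g(t_j)$ vanish to order at least $k/e$ in $z-z_0$, so the contribution of this sheet group to $\mathrm{ord}_{z_0}\Delta$ is at least $2\binom{e}{2}\cdot k/e=(e-1)k\geq e-1$; ramification of $g$ can only \emph{increase} the vanishing order, which is the favorable direction for the inequality $n_{\Ram(\pi)}(t)\le n_0(\Delta,t)$. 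So the Baire-category set-up can be deleted, and the constant is fine.

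Two smaller technical points that deserve care if the argument is to be made airtight: (a) when $g$ takes the value $\infty$ at a ramification point of $\pi$, the discriminant $\Delta$ may have a pole there rather than a zero; one should first compose $g$ with a generic M\"obius transformation (which changes $T_g(r)$ by $O(1)$), or work with the homogeneous discriminant on $\Pro^1$. (b) The estimate $T(\Delta,r)\le 2d(d-1)T_g(r)+O(1)$ should not be obtained by naive term-by-term bounds on the monomials of $\Delta$ (that introduces spurious constants), but rather by viewing $\Delta$ as a homogeneous form of degree $2(d-1)$ in $(1,c_1,\dots,c_d)$ and using the comparison between $T_g(r)$ and the Nevanlinna height of the curve $[1:c_1:\dots:c_d]\colon\C\to\Pro^d$, which is the classical Valiron--Selberg normalization and gives the factor $d$. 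Both points are standard in the algebroid literature (Valiron, Selberg, Ullrich, Stoll), but they are precisely the places where the paper chose to cite Ullrich rather than reproduce the computation.
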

This is a generalization of the following classical result of Ullrich \cite{Ullrich}:

\begin{theorem}[Ullrich]\label{ThmUllrich} Let $(B,f,\pi)$ be an algebroid map to $\Pro^1$ satisfying that $\adeg(f)=\cdeg(f)$, and let $d$ be this number. Then 
$$
N_{\Ram(\pi)}(r) \le 2(d-1)T_f(\Ocal_{\Pro^1}(1),r) + O(1).
$$
\end{theorem}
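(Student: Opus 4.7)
The plan is to associate to $f$ the discriminant of its minimal polynomial over the base, bound the ramification of $\pi$ pointwise by the vanishing of this discriminant, and then bound the Nevanlinna characteristic of the discriminant by $T_f$.

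For the setup, the hypothesis $\adeg(f)=\cdeg(f)=d$ says that $\pi$ and $f$ present $\C(B)$ as the degree-$d$ extension of $\C(z)=\C(\pi)$ generated by $f$. Let $P(z,w)\in \C(z)[w]$ be the monic minimal polynomial of $f$. Passing to a Galois-closure cover $p\colon B'\to B$ over $\C$, the polynomial $P$ splits as $P(z,w)=\prod_{i=1}^{d}(w-f_i(z))$ with each $f_i$ meromorphic on $B'$; the symmetric expression $\Delta(z)=\prod_{i<j}(f_i-f_j)^2$ then descends to a meromorphic function on $\C$.

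The crucial pointwise inequality is $\ord_{z_0}(\Delta)\ge \sum_{b\in\pi^{-1}(z_0)}(e_b-1)$ at every $z_0\in\C$, the classical discriminant--different bound in tame ramification (automatic in characteristic zero). Summing this and taking into account the $1/d$ built into the paper's definition of $N_{\Ram(\pi)}$ gives $d\cdot N_{\Ram(\pi)}(r)\le N(r,1/\Delta)+O(1)\le T(r,\Delta)+O(1)$, where the second step is the First Main Theorem for $\Delta$ viewed as a meromorphic function on $\C$. On $B'$, where the branches are honest meromorphic functions, subadditivity of the Nevanlinna characteristic yields
$$T(r,\Delta)\le 2\sum_{i<j}T(r,f_i-f_j)+O(1)\le 2(d-1)\sum_{i=1}^{d}T(r,f_i)+O(1),$$
and since the $f_i$ are Galois conjugates of $f$, each has the same characteristic as $f$ on $B'$; by the covering invariance recorded in the paper this common value equals $T_f(\Ocal_{\Pro^1}(1),r)+O(1)$, so $\sum_i T(r,f_i)=d\cdot T_f(\Ocal_{\Pro^1}(1),r)+O(1)$. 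Chaining the estimates and dividing by $d$ yields the bound.

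The main technical hurdle is justifying the pointwise discriminant--different inequality in the analytic category. At each $z_0\in\C$ I would pass to a local uniformizer on $B$ and reduce to the totally ramified model $z=\zeta^e$, where $P(z,w)=\prod_{k=0}^{e-1}(w-f(\omega^k\zeta))$ with $\omega$ a primitive $e$-th root of unity, and a direct computation of $\prod_{k<\ell}(f(\omega^k\zeta)-f(\omega^\ell\zeta))^2$ in the uniformizer $\zeta$ shows the required vanishing of order at least $e-1$ at $z=0$. One should additionally handle the possible non-monicity of $P$ in $w$ when $f$ has poles (clearing leading coefficients introduces only an $O(1)$ error).
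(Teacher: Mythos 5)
The paper does not prove Theorem \ref{ThmUllrich}; it is quoted as a classical result of Ullrich \cite{Ullrich}, and the paper's own work at this point (Theorem \ref{ThmRamBd}) is the reduction from an arbitrary target $X$ to the $\Pro^1$ case via a generic linear projection. So the evaluation below is of your proposal on its own merits rather than against a proof in the text.

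Your overall plan (discriminant of the minimal polynomial, pointwise comparison with ramification, subadditivity of the characteristic) is the right classical route, and the final link in the chain -- bounding $T(r,\Delta)$ by $2d(d-1)\,T_f(\Ocal_{\Pro^1}(1),r)+O(1)$ via the Galois conjugates and covering invariance, then dividing by $d$ -- is arithmetically correct. The problem is the step you call the ``crucial pointwise inequality.'' The bound $\ord_{z_0}(\Delta)\ge\sum_{b\in\pi^{-1}(z_0)}(e_b-1)$ is \emph{false} whenever $f$ has a pole over $z_0$, and this is not the minor bookkeeping you allude to. Concretely, take $B=\C$, $\pi(\zeta)=\zeta^2$, $f(\zeta)=1/\zeta$. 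Then $\adeg(f)=\cdeg(f)=2$, the two branches over $z=\zeta^2$ are $\pm 1/\zeta$, and $\Delta(z)=\bigl(f(\zeta)-f(-\zeta)\bigr)^2=4/z$. This $\Delta$ has no zeros at all, so $N(r,1/\Delta)\equiv 0$, while $n_{\Ram(\pi)}(t)=1$ for all $t\ge 0$ and $N_{\Ram(\pi)}(r)$ grows like $\tfrac12\log r$; the inequality $d\,N_{\Ram(\pi)}(r)\le N(r,1/\Delta)+O(1)$ is thus broken (Ullrich's inequality itself still holds in this example -- it is your intermediate step that fails).

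The diagnosis ``possible non-monicity of $P$'' misses the point: over the field of meromorphic functions on $\C$ the minimal polynomial is monic from the start, with meromorphic coefficients, and the poles of those coefficients occur on an infinite discrete set with unbounded multiplicity; this is not an $O(1)$ effect. What goes wrong is that poles of $f$ produce poles of $\Delta$ that can absorb (or overwhelm) the zeros which were supposed to record ramification. The standard repair is the projective normalization: write the algebraic equation with \emph{entire} coefficients having no common zero,
$$
A_0(z)w^d+A_1(z)w^{d-1}+\cdots+A_d(z)=0,
$$
and use $D(z)=A_0^{2d-2}\prod_{i<j}(f_i-f_j)^2$, which is entire. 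For this $D$ the pointwise bound $\ord_{z_0}(D)\ge\sum_{b\in\pi^{-1}(z_0)}(e_b-1)$ does hold at poles of $f$ (the extra factor $A_0^{2d-2}$ clears the poles locally in a way one must verify), and one then relates $m(r,D)=T(r,D)+O(1)$ to $T_f$ through the characteristic of the coefficient vector $(A_0,\ldots,A_d)$. An equivalent fix replaces the Euclidean differences $f_i-f_j$ by chordal distances on $\Pro^1$ and works with the Ahlfors--Shimizu characteristic. Either way, a genuine additional argument -- not present in your proposal -- is needed to control the contribution of the poles of $f$.
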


 We remark that Theorem \ref{ThmRamBd} is analogous to Silverman's bound relating the height and the logarithmic discriminant of an algebraic point in projective space (cf.\ Theorem 2 in \cite{SilvermanDisc}.)

As we will explain, Theorem \ref{ThmRamBd} can be proved using general results of \cite{Stoll} (which extends \cite{Noguchi}), but we will give a simpler proof. For this we need the following observation:
\begin{lemma} \label{LemmaLifting} Let $p\colon Y\to X$ be a birational morphism of smooth complex projective varieties, let $Z$ be a proper Zariski closed set of $X$ such that $p$ is an isomorphism outside $Z$, and let $f\colon B\to X$ be an algebroid map whose image is not contained in $Z$. Then there is a unique holomorphic lifting $\widetilde{f}\colon B\to Y$ of $f$; thus $f=p\circ \widetilde{f}$.
\end{lemma}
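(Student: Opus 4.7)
The plan is to build the lift analytically via a graph-closure argument, exploiting that $Y$ is projective so all the relevant projections are proper. I would proceed in three steps.

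First, I would verify that $f^{-1}(Z)$ is a discrete subset of $B$. Since $Z\subsetneq X$ is Zariski closed and $f(B)\not\subset Z$, the preimage $f^{-1}(Z)$ is a proper closed analytic subset of the connected one-dimensional Riemann surface $B$, hence discrete. On the complement $U := B\setminus f^{-1}(Z)$, the map $p$ restricts to a biholomorphism between $p^{-1}(X\setminus Z)$ and $X\setminus Z$, so the composition $\widetilde f := p^{-1}\circ f$ is unambiguously defined as a holomorphic map $U\to Y$ with $p\circ\widetilde f = f$ there. The substance of the proof is extending this $\widetilde f$ across the discrete set $f^{-1}(Z)$.

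For the extension I would use a graph-closure construction. Let $\Gamma_U\subset U\times Y$ be the graph of $\widetilde f|_U$ and let $\Gamma$ be its closure in $B\times Y$. Projectivity of $Y$ makes the projection $\mathrm{pr}_B\colon B\times Y\to B$ proper, so $\Gamma$ is an irreducible analytic subset of pure dimension one and $\pi\colon\Gamma\to B$ is proper and bijective over the dense open $U$. Moreover, $\Gamma_U\subset B\times_X Y$ because $p\circ\widetilde f = f$ on $U$, and the condition $p(y)=f(b)$ is closed, so $\Gamma\subset B\times_X Y$. I would then normalize $\Gamma$ to obtain a smooth Riemann surface $\widetilde\Gamma$; the composition $\pi\circ\nu\colon\widetilde\Gamma\to B$ is a proper holomorphic map of connected Riemann surfaces with generic degree one, hence a global biholomorphism. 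Composing its inverse with the projection $\Gamma\to Y$ yields a holomorphic extension $\widetilde f\colon B\to Y$, and the relation $p\circ\widetilde f=f$ persists on all of $B$ because $\Gamma\subset B\times_X Y$.

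Uniqueness is then immediate: two holomorphic liftings agree on the dense open $U$ (where they are both forced to equal $p^{-1}\circ f$), and since $Y$ is Hausdorff the coincidence set is closed, so they agree on all of $B$ by the identity principle. The step demanding the most care is identifying the graph closure $\Gamma$ as a well-behaved one-dimensional analytic set and checking that its normalization is isomorphic to $B$; this is essentially the one-dimensional case of eliminating indeterminacies of a meromorphic map into a projective variety, which is particularly clean because the indeterminacy locus of such a map from a smooth variety has codimension at least two and hence is empty in dimension one.
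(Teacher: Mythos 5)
Your overall strategy---construct $\widetilde f$ on $U=B\setminus f^{-1}(Z)$ and extend across the discrete set via a graph-closure and normalization argument---is a legitimate analytic alternative to the paper's proof, which simply applies the valuative criterion of properness to the DVR $\C[[z]]$ (equivalently $\C\{z\}$) at each puncture and is therefore a one-line argument. However, as written your extension step has a real gap. The assertion that properness of $\mathrm{pr}_B\colon B\times Y\to B$ forces the closure $\Gamma$ of the graph $\Gamma_U$ to be an irreducible pure one-dimensional analytic subset is false: properness of the ambient projection does not turn topological closures of analytic sets into analytic sets. For instance, with $B=\Delta$, $Y=\Pro^1$, $U=\Delta^*$ and $\widetilde f(z)=e^{1/z}$, the closure of the graph in $\Delta\times\Pro^1$ contains all of $\{0\}\times\Pro^1$ and is not a one-dimensional analytic set, even though $\mathrm{pr}_\Delta$ is proper. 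What excludes such essential-singularity behavior in your situation is precisely the relation $p\circ\widetilde f=f$ with $f$ holomorphic across $b_0$, but your write-up only invokes the containment $\Gamma\subset B\times_X Y$ at the very end (to check $p\circ\widetilde f=f$), not to establish analyticity of $\Gamma$, which is where it is actually needed.

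Two repairs are available. (1) Observe that $\Gamma_U$ is a connected open subset of the closed analytic set $B\times_X Y$ over which the projection to $B$ is an isomorphism onto $U$; hence $\Gamma_U$ is a dense open subset of a unique one-dimensional irreducible component $W$ of $B\times_X Y$, and $\Gamma=\overline{\Gamma_U}=W$ is analytic, after which your normalization and degree-one argument go through unchanged. (2) More in keeping with your closing remark: $p^{-1}\colon X\dashrightarrow Y$ is a rational map, so in a local projective embedding of $Y$ near $f(b_0)$ the homogeneous components of $\widetilde f=p^{-1}\circ f$ are pull-backs by the holomorphic $f$ of rational functions on $X$ whose indeterminacy locus does not contain $f(B)$; these are meromorphic at $b_0$, so $\widetilde f$ has at worst a removable singularity there after clearing a common factor of $z$, which is exactly the codimension-two indeterminacy-elimination you allude to. Your uniqueness argument is fine, though the ``identity principle'' is not needed: a closed set containing the dense open $U$ is all of $B$.
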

\begin{proof} This is a consequence of the valuative criterion of properness applied to $\C[[z]]$.
\end{proof}

With this at hand, we can prove Theorem \ref{ThmRamBd}.

\begin{proof}[Proof of Theorem \ref{ThmRamBd}] It suffices to prove the result in the case $X=\Pro^M$ and $\Acal=\Ocal_{\Pro^M}(1)$; the general case follows by considering an embedding $\theta\colon X\to \Pro^M$ associated to the very ample line sheaf $\Acal$ and applying the functoriality property of the Nevanlinna height:
$$
T_f(\Acal,r)=T_f(\theta^*\Ocal_{\Pro^M}(1),r) + O(1)=T_{\theta\circ f} (\Ocal_{\Pro^M}(1),r) + O(1).
$$
Further, by Ullrich's Theorem \ref{ThmUllrich} we may assume $M\ge 2$. We remark that the case $X=\Pro^M$ and $\Acal=\Ocal_{\Pro^M}(1)$ is a consequence of Corollary 6.8 in \cite{Stoll}, but for the convenience of the reader we will give a shorter proof that suffices in our setting.

Then we have an algebroid function $f\colon B\to \Pro^M$ with $\adeg(f)=\deg(\pi)=d$. Let $q\colon \Pro^M\dashrightarrow \Pro^1$ be a linear projection with indeterminacy locus $L$ (a linear subspace of codimension $2$), which is general enough so that the following holds:
\begin{itemize}
\item The image of $f$ is not contained in $L$ or in the hyperplane $H:=q^*[0:1]\subseteq \Pro^M$, and 
\item $g:= q\circ f$ extends to an algebroid function $g\colon B\to \Pro^1$ with $\adeg(g)=\adeg(f)=\deg(\pi)=d$ (this is possible because given $d$ different points in $\Pro^M$, a general choice of $q$ will map them injectively to $\Pro^1$.)
\end{itemize}

Let $\beta \colon Y\to \Pro^M$ be the blow up along $L$ with exceptional divisor $E$, and let $\widetilde{q}\colon Y\to \Pro^1$ be the induced morphism. By Lemma \ref{LemmaLifting} we obtain an algebroid function $\widetilde{f}\colon B\to Y$ such that  $f=\beta\circ \widetilde{f}$.  The following commutative diagram summarizes the maps that we are studying:

$$
 \begin{tikzcd}
   & & Y \arrow{ddrr}{\widetilde{q}} \arrow{d}{\beta} & & \\
     & & \Pro^M \arrow[dashed, swap]{drr}{q} & & \\
   B\arrow{uurr}{\widetilde{f}} \arrow{rrrr}{g} \arrow[swap]{urr}{f}& & &  & \Pro^1
 \end{tikzcd}
 $$

Let us  consider $D=[0:1]$ as a divisor on $\Pro^1$, so that $H=q^*D$ is a hyperplane in $\Pro^M$. We remark that
\begin{equation}\label{EqnDivPB}
\widetilde{q}^*D \le \beta^*q^*D = \beta^*H
\end{equation}
and the inequality is strict, since $E$ is not in the support of $\widetilde{q}^*D$ because $E$ surjects onto $\Pro^1$ via $\widetilde{q}$.

By Ullrich's Theorem \ref{ThmUllrich} and the fact that $\adeg(g)=\adeg(f)=\deg(\pi)=d$ we have
$$
N_{\Ram(\pi)}(r)\le 2(d-1)T_g(\Ocal_{\Pro^1}(1), r) +O(1)=2(d-1)T_g(D, r) +O(1).
$$
Now, using functoriality of the Nevanlinna height we get
$$
T_{g}(D,r)= T_{\widetilde{q}\circ \widetilde{f}}(D,r)=T_{\widetilde{f}}(\widetilde{q}^*D,r) + O(1).
$$
By positivity of the Nevanlinna height in the case of effective divisors and by \eqref{EqnDivPB}, we obtain
$$
T_{\widetilde{f}}(\widetilde{q}^*D,r) \le T_{\widetilde{f}}(\beta^*H,r) +O(1)= T_{\beta\circ \widetilde{f}}(H,r)+O(1)=T_f(\Ocal_{\Pro^M}(1),r)+O(1).
$$
Putting the last three computations together we obtain the desired result.
\end{proof}


\section{Proof of the main results}


\subsection{Vanishing} If an inequality holds for the positive real variable $r$ outside a set of finite measure, we use the symbol $\le_{\rm exc}$. Let us recall the following corollary of McQuillan's Tautological Inequality (Theorem A in \cite{McQuillan}), see Corollary 29.9 in \cite{VojtaCIME} with $D=0$.

\begin{theorem}[Vanishing using a ramification term]\label{ThmMcQuillan} Let $X$ be a smooth complex projective variety, let $\Acal$ be an ample line sheaf on $X$, let $\Lcal$ be a line sheaf on $X$, let $m\ge 1$ be an integer and let $\omega\in H^0(X, \Lcal^{\vee}\otimes S^m\Omega^1_X)$. Let $(B,f,\pi)$ be an algebroid map to $X$. If $f^*\omega$ is not identically zero on $B$, then 
$$
\frac{1}{m} T_f(\Lcal,r)\le_{\rm exc} N_{\Ram(\pi)}(r) + o\left(T_{f}(\Acal ,r)\right).
$$
\end{theorem}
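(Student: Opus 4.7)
The plan is to deduce the inequality from McQuillan's Tautological Inequality applied to the canonical derivative lift of $f$ into the projectivized cotangent bundle $P=\Proj_X\Omega^1_X$, combined with the trivial lower bound on the Nevanlinna height furnished by a non-vanishing global section.

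First, I would construct the canonical lift $\widetilde{f}\colon B\to P$. Wherever $f$ is non-constant and $\pi$ is unramified, the derivative $df$ gives a line quotient of $f^*\Omega^1_X$ and hence a point in the fiber of $p\colon P\to X$ over $f(b)$; this defines $\widetilde{f}$ on a Zariski open dense subset of $B$. Because $B$ is a Riemann surface and $p$ is proper, this meromorphic partial lift extends uniquely to a genuine holomorphic map $\widetilde{f}\colon B\to P$ with $p\circ\widetilde{f}=f$ (the same principle underlying Lemma \ref{LemmaLifting}). Via the canonical isomorphism
\[
H^0(X,\Lcal^\vee\otimes S^m\Omega^1_X)\simeq H^0(P,p^*\Lcal^\vee\otimes\Ocal_P(m))
\]
recalled before the definition of $\sigma(X,\Lcal)$, the section $\omega$ corresponds to a section $\widetilde{\omega}\in H^0(P,p^*\Lcal^\vee\otimes\Ocal_P(m))$, and the hypothesis $f^*\omega\not\equiv 0$ translates into $\widetilde{f}^*\widetilde{\omega}\not\equiv 0$ on $B$.

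Second, because $\widetilde{f}^*\widetilde{\omega}$ is a non-identically-zero holomorphic section of a line sheaf on $B$, the First Main Theorem yields the trivial lower bound
\[
T_{\widetilde{f}}(p^*\Lcal^\vee\otimes\Ocal_P(m),r)\ge O(1).
\]
Using additivity of heights in tensor products, the relation $T_{\widetilde{f}}(\Ocal_P(m),r)=m\,T_{\widetilde{f}}(\Ocal_P(1),r)+O(1)$, and the functoriality identity $T_{\widetilde{f}}(p^*\Lcal,r)=T_f(\Lcal,r)+O(1)$, this rearranges into
\[
T_f(\Lcal,r)\le m\,T_{\widetilde{f}}(\Ocal_P(1),r)+O(1).
\]

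Third, I would invoke McQuillan's Tautological Inequality (Theorem A of \cite{McQuillan}) applied to the derivative lift $\widetilde{f}$ in its algebroid formulation, namely
\[
T_{\widetilde{f}}(\Ocal_P(1),r)\le_{\rm exc} N_{\Ram(\pi)}(r)+o(T_f(\Acal,r)).
\]
Chaining with the previous bound and dividing by $m$ gives the statement. The genuinely delicate step is this third one: securing the tautological inequality with the correct ramification term in the algebroid setting. For an entire holomorphic curve $\C\to X$ the argument uses logarithmic derivative estimates on the derivative lift, and the resulting ramification term counts only the zeroes of $df$; when one replaces the source $\C$ by the ramified cover $\pi\colon B\to\C$, those derivative estimates must be transported through $\pi$, and bookkeeping the critical points of $\pi$ (all Nevanlinna functions being normalized by $\cdeg(\pi)$) produces precisely the extra term $N_{\Ram(\pi)}(r)$. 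This is worked out as Corollary 29.9 in \cite{VojtaCIME} (with $D=0$), which I would simply cite.
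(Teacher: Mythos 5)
Your approach follows essentially the same route as the paper: both reduce the statement to McQuillan's tautological inequality as packaged in Corollary~29.9 of \cite{VojtaCIME} with $D=0$. Your step-by-step reconstruction of the mechanism behind that corollary (the derivative lift $\widetilde f\colon B\to P=\Proj_X\Omega^1_X$, the identification of $\omega$ with a section $\widetilde\omega$ of $p^*\Lcal^\vee\otimes\Ocal_P(m)$, the lower bound $T_f(\Lcal,r)\le m\,T_{\widetilde f}(\Ocal_P(1),r)+O(1)$ from the non-vanishing pullback, and then the algebroid tautological inequality for $T_{\widetilde f}(\Ocal_P(1),r)$) is accurate.

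There is, however, one gap that the paper explicitly flags and you do not. Corollary~29.9 of \cite{VojtaCIME} only yields the stated inequality when $f$ is \emph{transcendental}: when $f$ is algebraic (i.e., $(B,f,\pi)$ compactifies to a morphism of compact Riemann surfaces), the characteristic $T_f(\Acal,r)$ grows only like $\log r$, and the error term in the cited corollary is then not $o(T_f(\Acal,r))$, so the statement does not follow from the citation alone. Since the downstream applications (Theorem~\ref{ThmVanishing}, and hence Theorems~\ref{ThmAlgHyp} and \ref{ThmSymmHyp}) must rule out all non-constant algebroid maps including the algebraic ones, the algebraic case cannot be discarded. The paper closes it with a short direct argument: if $C$ is a smooth projective curve compactifying $B$ with $\pi\colon C\to\Pro^1$ and $f\colon C\to X$, a non-zero section of $f^*\Lcal^\vee\otimes S^m\Omega^1_C$ forces
\[
\frac1m\deg_C(f^*\Lcal)\le\deg_C\Omega^1_C=2\bigl(g(C)-1\bigr),
\]
and the Riemann--Hurwitz formula applied to $\pi\colon C\to\Pro^1$ converts $2(g(C)-1)$ into the ramification counting function $N_{\Ram(\pi)}(r)$ up to $O(1)$. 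You should supply this case to complete the proof rather than ``simply citing'' Corollary~29.9.
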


Strictly speaking, Corollary 29.9 in \cite{VojtaCIME} only gives this when $f$ is transcendental, due to the error term. But when $f$ is algebraic this directly follows from the Riemann--Hurwitz formula as follows:

Let $C$ be a smooth projective algebraic curve, let $\pi:C\to \Pro^1$ be surjective, and let $f:C\to X$. Keep the notation and assumptions of Theorem \ref{ThmMcQuillan} regarding $\omega$, in particular $f^*\omega\ne 0$ on $C$. Then $f^*\Lcal^\vee \otimes S^m\Omega^1_C$ has a non-zero section and therefore it has non-negative degree on $C$. This gives 
$$
\frac{1}{m}\deg_C( f^*\Lcal)  \le \deg_C(\Omega^1_C) = 2(g(C)-1)
$$
where $g(C)$ is the genus of $C$. Then one applies the Riemann--Hurwitz formula on $\pi:C\to \Pro^1$ to get the ramification term from $2(g(C)-1)$. This concludes the argument in the algebraic case. 

Let us now prove our vanishing result.

\begin{proof}[Proof of Theorem \ref{ThmVanishing}] Without loss of generality we may assume that $\cdeg(f)=\adeg(f)=d$. Then the result is proved by applying Theorem \ref{ThmMcQuillan} with our choice of $\Acal$ very ample, with $\Lcal=\Acal^{\otimes a}$, and then the result follows from our Theorem \ref{ThmRamBd} which allows us to control $N_{\Ram(\pi)}(r)$. 

Namely, for the sake of contradiction assume $f^*\omega\ne 0$; in particular $f$ is non-constant. Then with the previous choices we get
$$
\frac{a}{m}T_f(\Acal,r)\le_{\rm exc} 2(d-1)T_f(\Acal,r) + o\left(T_{f}(\Acal ,r)\right).
$$
Since $2(d-1)<a/m$, we reach a contradiction by letting $r\to \infty$.
\end{proof}


\subsection{Hyperbolicity}\label{SecHyp} Let us record the following observation:

\begin{lemma}\label{LemmaConst} Let $X$ be a smooth complex projective variety of dimension $n$, let $P=\Proj_X(\Omega^1_X)$ with canonical projection $\pi:P\to X$, let $\Lcal$ be a line sheaf on $X$, and let $m\ge 1$ be an integer. Suppose that $\pi^*\Lcal\otimes \Ocal_P(m)$ is globally generated on $P$. If $f\colon B\to X$ is an algebroid map with $f^*\omega=0$ for each $\omega\in H^0(X,\Lcal\otimes S^m\Omega^1_X)$, then $f$ is constant.
\end{lemma}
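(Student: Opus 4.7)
The plan is to argue the contrapositive: assume $f\colon B\to X$ is non-constant and exhibit some $\omega\in H^0(X,\Lcal\otimes S^m\Omega^1_X)$ with $f^*\omega\neq 0$. The strategy is to lift $f$ canonically to the projectivized cotangent bundle $P$ and then transfer the global generation hypothesis on $P$ back down to~$B$.

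First I would construct a holomorphic lift $\widetilde{f}\colon B\to P$ satisfying $\pi\circ\widetilde{f}=f$. Outside the (discrete) critical locus of $f$, the derivative of $f$ yields a surjection $f^*\Omega^1_X\twoheadrightarrow \Omega^1_B$, which at each such point $b$ determines a $1$-dimensional quotient of $\Omega^1_X$ at $f(b)$, i.e.\ a point of $P$ over $f(b)$. Since $\pi\colon P\to X$ is proper, $\widetilde{f}$ extends uniquely and holomorphically across each critical point by the valuative criterion of properness applied in a local coordinate on $B$, exactly as in the proof of Lemma~\ref{LemmaLifting}.

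Next I would compare pullbacks. Under the canonical isomorphism
$$
H^0(P,\pi^*\Lcal\otimes\Ocal_P(m))\simeq H^0(X,\Lcal\otimes S^m\Omega^1_X),\qquad \widetilde{\omega}\leftrightarrow \omega,
$$
I would verify that for any $b$ outside the critical locus of $f$, the value of $\widetilde{f}^*\widetilde{\omega}$ at $b$ vanishes if and only if the value of $f^*\omega$ at $b$ does. This reduces to the observation that over the non-critical locus the quotient $f^*\Omega^1_X\twoheadrightarrow \widetilde{f}^*\Ocal_P(1)$ coincides with the derivative quotient $f^*\Omega^1_X\twoheadrightarrow \Omega^1_B$, so passing $\omega$ through the former yields exactly $f^*\omega$ as a section of $f^*\Lcal\otimes(\Omega^1_B)^{\otimes m}$.

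To finish, choose any $b_0\in B$ outside the critical locus of $f$ (which exists because $f$ is non-constant). By the global generation hypothesis on $P$, some section $\widetilde{\omega}$ of $\pi^*\Lcal\otimes\Ocal_P(m)$ does not vanish at $\widetilde{f}(b_0)\in P$; hence $\widetilde{f}^*\widetilde{\omega}$ does not vanish at $b_0$, and the corresponding $\omega$ satisfies $f^*\omega\neq 0$, contradicting the hypothesis. I expect the only delicate step to be the extension of $\widetilde{f}$ across the critical locus together with the identification of $\widetilde{f}^*\Ocal_P(1)$ (which globally differs from $\Omega^1_B$ by an effective twist encoding the ramification of $f$). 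Since the contradiction is produced at a single non-critical point $b_0$, it suffices to have the comparison on a dense open subset, so these complications localize away.
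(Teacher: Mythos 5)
Your proposal is correct and follows essentially the same route as the paper: construct the canonical lift $\widetilde{f}\colon B\to P$ of $f$, identify pulling back $\omega$ via $f$ with pulling back the corresponding section $\widetilde{\omega}$ of $\pi^*\Lcal\otimes\Ocal_P(m)$ via $\widetilde{f}$, and then invoke global generation (equivalently, empty base locus) to produce a section not vanishing along the image of $\widetilde{f}$. The paper's proof is terser but makes exactly this move; your spelled-out verification that the comparison holds on the dense noncritical locus, and that this suffices for the contradiction at a single noncritical point, is the correct content hidden in the paper's phrase ``the image of $f'$ is contained in the base locus.''
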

\begin{proof} For the sake of contradiction, assume that $f$ is non-constant and let $f':B\to P$ be its canonical lift. If $f$ is $\omega$-integral for every $\omega\in H^0(X,\Lcal\otimes S^m\Omega^1_X)$, then the image of $f'$ in $P$ is contained in the base locus of the line sheaf $\pi^*\Lcal\otimes \Ocal_P(m)$, but this is empty because $\pi^*\Lcal\otimes \Ocal_P(m)$ is globally generated. So, it is not possible that $f$ be non-constant; this is a contradiction.
\end{proof}

We now return to our hyperbolicity results:

\begin{proof}[Proof of Theorem \ref{ThmAlgHyp}] We let $P=\Proj_X(\Omega^1_X)$ with canonical projection $p:P\to X$. Let us choose $a$ and $m$ such that  $2(n-1)< a/m$ and $(p^*\Acal^\vee)^{\otimes a}\otimes \Ocal_P(m)$ is globally generated on $P$; this is possible by the definition of $\sigma(X, \Acal)$ and the condition $2(n-1)< \sigma(X, \Acal)$. By Lemma \ref{LemmaConst} with $\Lcal= (\Acal^\vee)^{\otimes a}$, the result follows from Theorem \ref{ThmVanishing}.
\end{proof}

To connect with hyperbolicity of symmetric powers we now prove Proposition \ref{ProphIntro}:

\begin{proof}[Proof of Proposition \ref{ProphIntro}] It suffices to show that (iii) implies (ii), because the equivalence of (i) and (ii) as well as the implication (ii)$\Rightarrow$(iii) are clear. (Also, note that  we don't really need (ii)$\Rightarrow$(iii) in our discussion; we study algebroid maps to establish hyperbolicity of symmetric powers which corresponds to the implication (iii)$\Rightarrow$(ii).)

Suppose that (ii) fails. Consider the quotient map $\rho\colon X^n\to X^{(n)}$ and let $f\colon\C\to X^{(n)}$ a non-constant holomorphic map (which exists because (ii) fails.) Composing $f$ with $\rho^{-1}$ we get a non-constant multivalued holomorphic function to $X^n$, which therefore can be written as $F\colon\sqcup_{j} B_j\to X^n$ where each $B_j$ is a connected Riemann surface with a finite proper surjective map $\pi_j:B_j\to \C$ and  $F|_{B_j}\to X^n$ is an algebroid map. We take one of them which is non-constant, say $F_1=F|_{B_1}\colon B_1\to X^n$. Letting $w\colon B_1\to X$ be the first projection $X^n\to X$ composed with $F_1$, we see that $w$ is a non-constant algebroid map. 

Furthermore, for any given $z\in \C$ we have that $f(z)=\{x_1,...,x_n\}$ as a multiset with some $x_j\in X$. Therefore, if $b\in B_1$ satisfies $\pi_1(b)=z$ we see that $w(b)\in \{x_1,...,x_n\}$. Thus,  $\adeg(w)\le n$.
\end{proof}

Finally, let us prove our main hyperbolicity result for symmetric powers.

\begin{proof}[Proof of Theorem \ref{ThmSymmHyp}] By Theorem \ref{ThmAlgHyp} there is no non-constant algebroid map $f\colon B\to X$ with $2(d-1)<\sigma(X,\Acal)$ where $d=\adeg(f)$. Proposition \ref{ProphIntro} then shows that $ h(X)\ge n$ where $n$ is the largest integer with $2(n-1) <\sigma(X,\Acal)$, and this implies $ h(X)\ge \sigma(X,\Acal)/2$.
\end{proof}


\section{Applications}


\subsection{Curves in surfaces}

\begin{proof}[Proof of Theorem \ref{ThmCurves}] Let $K_C$ be the canonical divisor class of $C$. Then, by adjunction we get
$$
\deg K_C = C^2 + C.K_S = \ell^2H^2 + \ell H.K_S.
$$
Let $A=H|_C$; this is a very ample divisor on $C$. We let $\Acal=\Ocal_C(A)$. Note that $\deg A= C.H=\ell H^2$. 

We observe that $\sigma(C,\Acal)$ is at least equal to the supremum of the numbers $a/m$ as $a,m\ge 1$ vary over positive integers such that $-aA + mK_C$ has positive degree. Thus, $\sigma(C,\Acal)\ge \tau$ where $\tau$ is the supremum of the positive real numbers $t$ with
$$
-t \ell H^2 + \ell^2H^2 + \ell H.K_S>0.
$$
As $H$ is ample, we have $H^2>0$. Thus we see that the previous condition is equivalent to
$$
t< \ell + (H.K_S)/H^2.
$$
Hence, $\sigma(C,\Acal)\ge \tau = \ell + (H.K_S)/H^2$. The result now follows from Theorem \ref{ThmSymmHyp}.
\end{proof}


\subsection{Subvarieties of abelian varieties}

\begin{proof}[Proof of Theorem \ref{ThmAbVar}] Since $X$ is a complete intersection of ample smooth hypersurfaces and $[\ell]\colon A\to A$ is finite, $X_\ell$ is also a complete intersection of ample smooth hypersurfaces. Therefore $X_\ell$ is connected (inductively apply Corollary 7.9 in p.244 of \cite{Hartshorne}.) On the other hand, since $X$ is smooth and $[\ell]$ is \'etale, we have that $X_\ell$ is smooth. Therefore, $X_\ell$ is irreducible. It will be convenient to write $\Acal_\ell=\Acal|_{X_\ell}$, which is a very ample line sheaf on $X_\ell$.

Since $\Acal$ is symmetric (i.e. $[-1]^*\Acal\simeq \Acal$), we have that $[\ell]^*\Acal \simeq \Acal^{\otimes \ell^2}$ by a well-known formula of Mumford, see for instance Corollary A.7.2.5 in \cite{HindrySilverman}. 

Let us write $P_\ell = \Proj_{X_\ell} \Omega^1_{X_\ell}$ which comes with the canonical projection $p_\ell:P_\ell\to X_\ell$. 

Let $a,m\ge 1$ be integers with $(p^*\Acal_1^{\vee})^{\otimes a}\otimes  \Ocal_{P_1}(m)$ globally generated on $P_1$; this is possible because $\sigma(X,\Acal)>0$. Let $\rho_\ell= [\ell]|_{X_\ell}\colon X_\ell\to X$ the multiplication-by-$\ell$ map restricted to $X_\ell$ and by Mumford's formula
$$
\rho_\ell^*\left((\Acal_1^{\vee})^{\otimes a}\otimes  S^m\Omega^1_X\right)\simeq 
\left((([\ell]^*\Acal)|_{X_\ell})^{\vee}\right)^{\otimes a}\otimes S^m\rho_\ell^*\Omega^1_{X} \simeq (\Acal_\ell^{\vee})^{\otimes a\ell^2}\otimes  S^m\Omega^1_{X_\ell}
$$
where the isomorphism $\rho_\ell^*\Omega^1_{X}\simeq\Omega^1_{X_\ell}$ is due to the fact that $\rho_\ell$ is finite, surjective, and \'etale.

Let $\overline{\rho_\ell}:P_\ell\to P_1$ be the map induced by $\rho_\ell$. The previous isomorphism induces an isomorphism
$$
\overline{\rho_\ell}^*\left((p_1^*\Acal_1)^{\otimes a}\otimes \Ocal_{X_1}(m)\right) \simeq (p_\ell^*\Acal_\ell)^{\otimes a\ell^2}\otimes \Ocal_{X_\ell}(m).
$$

When $u:Y\to Z$ is a morphism of varieties and $\Fcal$ is a globally generated line sheaf on $Z$, one has that $u^*\Fcal$ is also globally generated on $Y$. It follows that $(p_\ell^*\Acal_\ell)^{\otimes a\ell^2}\otimes \Ocal_{X_\ell}(m)$ is globally generated on $P_\ell$ (taking $u=\overline{\rho_\ell}:P_\ell\to P_1$ and $\Fcal=(p_1^*\Acal_1)^{\otimes a}\otimes \Ocal_{X_1}(m)$), from which we obtain $\sigma(X_\ell,\Acal_\ell)\ge a\ell^2/m$. Taking the supremum of $a/m$, we obtain the result.
\end{proof}

\section{Acknowledgments}

N.G.-F. was supported by ANID Fondecyt Regular grant 1211004 from Chile.

H.P. was supported by ANID Fondecyt Regular grant 1230507 from Chile.

The first author would like to thank the Isaac Newton Institute for Mathematical Sciences, Cambridge, for support and hospitality during the programme \emph{New equivariant methods in algebraic and differential geometry}, where work on this paper was undertaken. This work was supported by EPSRC grant EP/R014604/1.



\begin{thebibliography}{9}         

\bibitem{BaseLoci} T. Bauer, S. Kov\'aks, A. K\"uronya, E. Mistretta, T. Szemberg, S. Urbinati, \emph{On positivity and base loci of vector bundles}. Eur. J. Math. 1 (2015), no. 2, 229-249. 

\bibitem{Brotbek} D. Brotbek, L. Darondeau, \emph{Complete intersection varieties with ample cotangent bundles}. Invent. Math. 212 (2018), no. 3, 913--940.

\bibitem{CCR} B. Cadorel, F. Campana, E. Rousseau, \emph{Hyperbolicity and specialness of symmetric powers}.  J. \'Ec. polytech. Math. 9 (2022), 381--430. 

\bibitem{Debarre} O. Debarre, \emph{Varieties with ample cotangent bundle}. Compos. Math. 141 (2005), no. 6, 1445--1459. 

\bibitem{Dem} J.-P. Demailly, \emph{Vari\'et\'es hyperboliques et \'equations diff\'erentielles alg\'ebriques.} Gaz. Math. No. 73 (1997), 3--23. 

\bibitem{Dong} X. Dong, \emph{Algebroid functions on complex manifolds}. Preprint (2023) arXiv: 2311.07360

\bibitem{Faltings2} G. Faltings, \emph{Diophantine approximation on abelian varieties.} Ann. of Math. (2) 133 (1991), no. 3, 549--576.

\bibitem{Frey} G. Frey, \emph{Curves with infinitely many points of fixed degree}. Israel J. Math. 85 (1994), no. 1-3, 79--83.


\bibitem{HarrisSilverman} J. Harris, J. Silverman, \emph{Bielliptic curves and symmetric products.} Proc. Amer. Math. Soc. 112 (1991), no. 2, 347--356. 

\bibitem{Hartshorne} R. Hartshorne, \emph{Algebraic geometry}. Graduate Texts in Mathematics, No. 52. Springer-Verlag, New York-Heidelberg, 1977. xvi+496 pp.

\bibitem{HindrySilverman} M. Hindry, J. Silverman, \emph{Diophantine geometry. An introduction}. Graduate Texts in Mathematics, 201. Springer-Verlag, New York, 2000.

\bibitem{Vogt2} B. Kadets, I. Vogt, \emph{Subspace configurations and low degree points on curves}. Preprint (2022) arXiv: 2208.01067

\bibitem{Kawamata} Y. Kawamata, \emph{On Bloch's conjecture}. Invent. Math. 57 (1980), no. 1, 97--100. 

\bibitem{LangHyp0} S. Lang, \emph{Higher dimensional diophantine problems.} Bull. Amer. Math. Soc. 80 (1974), 779--787. 

\bibitem{LangHyp} S. Lang, \emph{Hyperbolic and Diophantine analysis.} Bull. Amer. Math. Soc. (N.S.) 14 (1986), no. 2, 159--205.  

\bibitem{McQuillan} M. McQuillan, \emph{Diophantine approximations and foliations}. Inst. Hautes \'Etudes Sci. Publ. Math. No. 87 (1998), 121--174. 

\bibitem{Noguchi} J. Noguchi, \emph{Meromorphic mappings of a covering space over $C^m$ into a projective variety and defect relations.}  Hiroshima Math. J. 6 (1976), no. 2, 265--280.

\bibitem{Selberg} H. L. Selberg, \emph{Algebroide Funktionen und Umkehrfunktionen Abelscher Integrale.} Avh. Norske Vid. Akad. Oslo, 8 (1934), 1--72.

\bibitem{SilvermanDisc} J. Silverman, \emph{Lower bounds for height functions}. Duke Math. J. 51 (1984), no. 2, 395--403.

\bibitem{SY} Y.-T. Siu, S.-K. Yeung, \emph{Hyperbolicity of the complement of a generic smooth curve of high degree in the complex projective plane.} Invent. Math. 124 (1996), no. 1-3, 573--618. 

\bibitem{Vogt1} G. Smith, I. Vogt, \emph{Low degree points on curves.} Int. Math. Res. Not. IMRN 2022, no. 1, 422--445. 

\bibitem{Stoll} W. Stoll, \emph{Algebroid reduction of Nevanlinna theory}. Complex analysis, III (College Park, Md., 1985-86), 131-241, Lecture Notes in Math., 1277, Springer, Berlin, 1987. 


\bibitem{Ullrich} E. Ullrich, \emph{\"Uber den Einfluss der Verzweigtheit einer Algebroide auf ihre Wertverteilung.} Journal f\"ur die reine und angewandte Mathematik, vol. 1932, no. 167, 1932, 198--220.

\bibitem{Valiron} G. Valiron, \emph{Sur la d\'eriv\'ee des fonctions alg\'ebro\"ides.} Bulletin de la S. M. F., tome 59 (1931), p. 17--39


\bibitem{VojtaQuadratic} P. Vojta, \emph{Arithmetic discriminants and quadratic points on curves.} Arithmetic algebraic geometry (Texel, 1989), 359-376, Progr. Math., 89, Birkh\"auser Boston, Boston, MA, 1991.

\bibitem{VojtaCIME} P. Vojta, \emph{Diophantine approximation and Nevanlinna theory}. Arithmetic geometry, 111-224, Lecture Notes in Math., 2009, Springer, Berlin, 2011.


\end{thebibliography}
\end{document}